\documentclass[11pt]{article}
\usepackage{amsmath,amssymb,amsthm, latexsym,color,epsfig,enumerate,a4, graphicx}

\date{}
\title{\vspace{-0.9cm}Finding paths in sparse random graphs requires many queries}
\author{
Asaf Ferber \thanks{Department of Mathematics, Yale University, and
Department of Mathematics, MIT, USA. Emails: asaf.ferber@yale.edu,
ferbera@mit.edu.} \and Michael Krivelevich\thanks{School of Mathematical Sciences, Raymond and Beverly Sackler Faculty of Exact Sciences, Tel Aviv University, Tel Aviv, 6997801, Israel. Email: krivelev@post.tau.ac.il. Research supported in part by USA-Israel BSF Grant
2010115 and by grant 912/12 from the Israel Science Foundation.}
 \and Benny Sudakov
\thanks{Department of Mathematics, ETH, 8092 Zurich, Switzerland.
Email: benjamin.sudakov@math.ethz.ch. Research supported in part by
SNSF grant 200021-149111.} \and Pedro Vieira \thanks{Department of
Mathematics, ETH, 8092 Zurich, Switzerland.  Email:
pedro.vieira@math.ethz.ch.} }

\oddsidemargin  0pt
\evensidemargin 0pt
\marginparwidth 40pt
\marginparsep 10pt
\topmargin 10pt
\headsep 10pt
\textheight 8.8in
\textwidth 6.7in

\allowdisplaybreaks

\makeatletter
\newtheorem*{rep@theorem}{\rep@title}
\newcommand{\newreptheorem}[2]{%
\newenvironment{rep#1}[1]{%
 \def\rep@title{#2 \ref{##1}}%
 \begin{rep@theorem}}%
 {\end{rep@theorem}}}
\makeatother

\newtheorem{theorem}{Theorem}
\newreptheorem{theorem}{Theorem}

\theoremstyle{plain}
\newtheorem{thm}{Theorem}

\newtheorem{lemma}[thm]{Lemma}

\newtheorem*{claim}{Claim}
\newtheorem*{conj*}{Conjecture}
\newtheorem*{ques}{Question}

\theoremstyle{definition}

\newtheorem*{dfn*}{Definition}

\newcommand{\NN}{\mathbb{N}}
\newcommand{\EE}{\mathbb{E}}

\begin{document}

\maketitle
\begin{abstract}

We discuss a new algorithmic type of problem in random graphs
studying the minimum number of queries one has to ask about adjacency
between pairs of vertices of a random graph $G\sim {\mathcal
G}(n,p)$ in order to find a subgraph which possesses some target
property with high probability. In this paper we focus on finding
long paths in $G\sim \mathcal G(n,p)$ when
$p=\frac{1+\varepsilon}{n}$ for some fixed constant $\varepsilon>0$.
This random graph is known to have typically linearly long paths.

To have $\ell$ edges with high probability in $G\sim \mathcal
G(n,p)$ one clearly needs to query at least
$\Omega\left(\frac{\ell}{p}\right)$ pairs of vertices. Can we find a
path of length $\ell$ economically, i.e., by querying roughly that
many pairs? We argue that this is not possible and one needs to
query significantly more pairs. We prove that any randomised
algorithm which finds a path of length $\ell=\Omega\left(\frac{\log\left(\frac{1}{\varepsilon}\right)}{\varepsilon}\right)$ with at least
constant probability in $G\sim \mathcal G(n,p)$ with $p=\frac{1+\varepsilon}{n}$ must query at least
$\Omega\left(\frac{\ell}{p\varepsilon
\log\left(\frac{1}{\varepsilon}\right)}\right)$ pairs of vertices.
This is tight up to the $\log\left(\frac{1}{\varepsilon}\right)$
factor.

\end{abstract}

\section{Introduction}
Let ${\cal P}$ be a monotone increasing graph property (that is, a property of graphs that cannot be violated
by adding edges). Suppose that the edge probability $p=p(n)$ is chosen so that a random graph $G$ drawn from the probability space $\mathcal G(n,p)$ has ${\cal P}$ with high probability (whp). How many queries of the type ``is $(i,j)\in E(G)$?" are needed for an adaptive algorithm interacting with the probability space  $\mathcal G(n,p)$ in order to reveal whp a subgraph $G'\subseteq G$ possessing $\mathcal P$?

This fairly natural algorithmic setting (see the excellent survey
of Frieze and McDiarmid  \cite{FM97} for an
extensive coverage of a variety of problems and results in
Algorithmic Theory of Random Graphs) has been considered implicitly in several papers on random graphs (e.g. \cite{ks13}, \cite{ABOW08}), but apparently has been stated explicitly only in the companion paper \cite{FKSV1} of the authors. Notice that in this framework the issue of concern is not the amount of computation required for the algorithm to find a target structure, but rather the amount of its interaction with the underlying probability space.

In the discussion below, we assume some basic familiarity with results about the probability space $\mathcal G(n,p)$; the reader is advised to consult monographs \cite{JLR} and \cite{bollobasrandom} for background on the subject.

In general, given a monotone property $\mathcal P$, what can we
expect? If all $n$-vertex graphs belonging to ${\cal P}$ have at
least $m$ edges, then the algorithm should get at least $m$ positive
answers to hit the target property with the required absolute
certainty. This means that the obvious lower bound in this case is
at least $(1+o(1))m/p$ queries. Perhaps one of the simplest graph properties to consider in this respect is connectedness: for any connected graph $G$ on $n$ vertices a spanning tree can be found after $n-1$ queries with positive answers -- the algorithm starts with an arbitrary vertex $v\in V(G)$, and each time queries the pairs leaving the current tree until the first edge is found, the tree is then updated by appending this edge. Thus for the regime where $\mathcal G(n,p)$ is whp connected (which is when $p(n)\ge\frac{\ln n+\omega(n)}{n}$ with $\lim_{n\rightarrow\infty}\omega(n)=1$), we get an algorithm whp discovering a spanning tree after querying $(1+o(1))n/p$ pairs of vertices. 

A much more challenging problem is that of Hamiltonicity, i.e., of finding a Hamilton cycle. In this case the trivial lower bound translates to $n$ positive answers. In  \cite{FKSV1} we show that this lower bound is tight by providing an adaptive algorithm interacting with the probability space
$\mathcal G(n,p)$, which whp finds a Hamilton cycle in $G\sim
\mathcal G(n,p)$ after obtaining only $(1+o(1))n$ positive answers
(provided $p$ is above the sharp threshold for Hamiltonicity in
$\mathcal G(n,p)$). 

Yet another positive example is that of uncovering a giant component in the supercritical regime $p=\frac{1+\varepsilon}{n}$. Though this was not the main concern in \cite{ks13}, the second and the third author presented there a very natural adaptive algorithm (essentially performing the Depth First Search (DFS) on a random input $G\sim \mathcal G(n,p)$), typically discovering a connected component of size at least $\epsilon n/2$ after querying $\epsilon n^2/2$ vertex pairs.

Upon reviewing these results, the reader may arrive at a conclusion that the above stated trivial lower bound for this type of problems is nearly tight for almost every natural graph property. However, this happens {\bf not} to be the case, and the main qualitative goal of the present paper is to provide such a negative example, including its analysis. Here we focus on the property of containing a
path of length $\ell$ in the supercritical regime in $G\sim\mathcal G(n,p)$, that is, when
$p=\frac{1+\varepsilon}{n}$ for some fixed constant $\varepsilon>0$. For this regime, $G\sim\mathcal G(n,p)$ is known to contain whp a path of length linear in $n$, due to the classical result of Ajtai, Koml\'os and Szemer\'edi \cite{AKS81} (see \cite{ks13} for a recent simple proof of this fact.)
Note that in order to have $\ell$ edges with high probability in
$G\sim\mathcal G(n,p)$ one needs to query at least
$\Omega\left(\frac{\ell}{p}\right)$ pairs of vertices. Can we find a
path of length $\ell$ by asking roughly that many queries, as in the
case of Hamiltonicity mentioned above? We show that in this case one
actually needs to query significantly more pairs of vertices:

\begin{reptheorem}{thm:noalgorithm}
There exists an absolute constant $C>0$ such that the following
holds. For every constant $q\in (0,1)$ there exist
$n_0,\varepsilon_0>0$ such that for every fixed $\varepsilon\in
(0,\varepsilon_0)$ and any $n\ge n_0$ there is no adaptive algorithm
which reveals a path of length $\ell\ge
\frac{3C}{\varepsilon}\ln\left(\frac{1}{\varepsilon}\right)$ with
probability at least $q$ in $G\sim \mathcal G\left(n,p\right)$,
where $p=\frac{1+\varepsilon}{n}$, by querying at most
$\frac{q\ell}{8640Cp\varepsilon\ln\left(\frac{1}{\varepsilon}\right)}$
pairs of vertices.
\end{reptheorem}

Notice that \cite{ks13}  presents a simple adaptive DFS
algorithm, finding a path of length
$\frac{1}{5}\varepsilon^2n$ with probability at least
$1-\exp\left(\Omega(\varepsilon n)\right)$ in $G\sim \mathcal
G(n,p)$ after querying only $O\left(\varepsilon n^2\right)$ pairs of
vertices. In fact, if one uses the same algorithm to find a path of
length $\ell\le \frac{1}{5}\varepsilon^2n$ in $G\sim \mathcal
G(n,p)$ then the same argument shows that such a path is found with
probability at least
$1-\exp\left(\Omega\left(\frac{\ell}{\varepsilon}\right)\right)$
after querying at most $O\left(\frac{\ell}{p\varepsilon}\right)$
pairs of vertices. This shows that up to the
$\Theta\left(\log\left(\frac{1}{\varepsilon}\right)\right)$ factor,
Theorem~\ref{thm:noalgorithm} is tight.

The key ingredient of the proof of Theorem~\ref{thm:noalgorithm} is the following result of independent interest.

\begin{reptheorem}{thm:notmanyvertexdisjointpaths}
There exist constants $C,\varepsilon_0>0$ such that for every fixed
$\varepsilon\in(0, \varepsilon_0)$ and $p=\frac{1+\varepsilon}{n}$
we have whp that a graph $G\sim \mathcal G\left(n,p\right)$ does not
contain a set of vertex disjoint paths of lengths at least
$\frac{C}{\varepsilon}\ln \left(\frac{1}{\varepsilon}\right)$ whose
union covers at least $13\varepsilon^2n$ vertices.
\end{reptheorem}

The rest of this paper is organised as follows. In Section~\ref{sec:auxiliarylemmas} we provide auxiliary lemmas needed for the proofs of Theorem~\ref{thm:noalgorithm} and \ref{thm:notmanyvertexdisjointpaths}. In Section~\ref{sec:proofthm1} we prove Theorem~\ref{thm:noalgorithm} assuming Theorem~\ref{thm:notmanyvertexdisjointpaths}. In Section~\ref{sec:proofthm2} we prove Theorem~\ref{thm:notmanyvertexdisjointpaths}. Finally, in Section~\ref{sec:concludingremarks} we discuss some concluding remarks.

\vspace{3mm}
\textbf{Notation.}
Our notation is fairly standard. Given a natural number $n$ we use $[n]$ to denote the set $\{1,2,\ldots,n\}$. Moreover, given a set $V$ we use $S_{V}$ to denote the permutation group of $V$ and $\binom{V}{2}$ to denote the set of all (unordered) pairs of elements in $V$.

Given a subset $S$ of the vertex set of a graph $G$, $G[S]$ denotes the subgraph of $G$ induced by the vertices in $S$, i.e. the graph with vertex set $S$ whose edges are the ones of $G$ between vertices in $S$.

A subgraph $P$ of the graph $G$ is called a \textit{path} if $V(P)=\{v_1,\ldots,v_{\ell}\}$ and the edges of $P$ are $v_1v_2$, $v_2v_3$, $\ldots$, $v_{\ell-1}v_{\ell}$. We shall oftentimes refer to $P$ simply by $v_1v_2\ldots v_{\ell}$. We say that such a path $P$ has \textit{length} $\ell-1$ (number of edges) and \textit{size} $\ell$ (number of vertices).

If $G$ is a graph then the \emph{$2$-core} of $G$ is the maximal induced subgraph of $G$ of minimum degree at least $2$. If no such subgraph exists then the $2$-core of $G$ is the empty graph.

Given an ordered set $V$ and a real number $p \in [0,1]$, the
binomial random graph model $\mathcal G(V,p)$ is a probability space whose
ground set consists of all labeled graphs on the vertex set $V$. We
can describe the probability distribution of $G\sim \mathcal G(V,p)$
by saying that each pair of elements of $V$ forms an edge in $G$
independently with probability $p$. If $V=[n]$ then we will abuse
notation slightly and use $\mathcal G(n,p)$ to refer to $\mathcal
G([n],p)$. Given a property ${\cal P}$ (that is, a collection of
graphs) and a function $p = p(n) \in [0,1]$, we say that $G\sim
\mathcal G(n,p)$ has ${\cal P}$ {\em with high probability} (or whp for brevity) if the
probability that $G\in \mathcal P$ tends to $1$ as $n$ tends to
infinity.

\section{Auxiliary Lemmas}
\label{sec:auxiliarylemmas}
\subsection{Concentration inequalities}
We need to employ standard bounds on large deviations of random
variables. The following well-known lemma due to Chernoff (commonly known as the ``Chernoff bound'') provides a bound on the upper tail of the Binomial distribution
(see e.g. \cite{AlonSpencer}, \cite{JLR}).

\begin{lemma}\label{Che}
Let $X \sim \emph{\text{Bin}}(n,p)$ and let $\mu=\mathbb{E}\left[X\right]$.
Then $\Pr\left[X\ge(1+a)\mu\right]<e^{-\frac{a^2\mu}{3}}$ for any $0<a<\frac{3}{2}$.
\end{lemma}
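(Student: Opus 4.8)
The plan is to use the standard exponential-moment (Bernstein--Chernoff) method. We may assume $p>0$, so that $\mu = np > 0$ (if $p=0$ the statement is degenerate and is never invoked in that form). Write $X = \sum_{i=1}^n X_i$ with the $X_i$ i.i.d.\ Bernoulli$(p)$. For any real $t>0$, since $x\mapsto e^{tx}$ is increasing and nonnegative, Markov's inequality gives
\[
\Pr[X\ge(1+a)\mu] = \Pr\big[e^{tX}\ge e^{t(1+a)\mu}\big] \le e^{-t(1+a)\mu}\,\EE\big[e^{tX}\big].
\]
By independence, $\EE[e^{tX}] = \prod_{i=1}^n \EE[e^{tX_i}] = (1-p+pe^t)^n = (1+p(e^t-1))^n \le e^{np(e^t-1)} = e^{\mu(e^t-1)}$, using $1+x\le e^x$. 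Hence $\Pr[X\ge(1+a)\mu]\le \exp\big(\mu(e^t-1) - t(1+a)\mu\big)$ for every $t>0$.

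Next I would optimise over $t$: the exponent $\mu\big(e^t-1-t(1+a)\big)$ is minimised at $t = \ln(1+a)$, which is positive since $a>0$. Substituting this value yields
\[
\Pr[X\ge(1+a)\mu] \le \exp\big(-\mu\,\varphi(a)\big), \qquad \varphi(a) := (1+a)\ln(1+a) - a .
\]
It then remains to prove the elementary inequality $\varphi(a) > a^2/3$ for all $0<a<3/2$; combined with $\mu>0$ this upgrades the bound to the strict estimate $\Pr[X\ge(1+a)\mu] < e^{-a^2\mu/3}$ claimed in the lemma.

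For the final step, set $g(a) = \varphi(a) - a^2/3$. Then $g(0)=0$, $g'(a) = \ln(1+a) - \tfrac{2a}{3}$ with $g'(0)=0$, and $g''(a) = \tfrac{1}{1+a} - \tfrac23$, which is positive for $a<1/2$ and negative for $a>1/2$. Thus $g'$ increases on $(0,1/2)$ and decreases on $(1/2,\infty)$; since $g'(0)=0$ it becomes positive immediately, so on $(0,3/2)$ it has exactly one sign change, and therefore $g$ is first increasing and then decreasing on $[0,3/2]$. Consequently the minimum of $g$ over the closed interval $[0,3/2]$ is attained at an endpoint; since $g(0)=0$ and $g(3/2) = \tfrac52\ln\tfrac52 - \tfrac94 > 0$, we conclude $g(a)>0$ for every $0<a<3/2$, which completes the proof. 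The whole argument is routine; the only point needing a little care is this concavity/monotonicity bookkeeping for $g$ together with tracking that the inequality stays strict.
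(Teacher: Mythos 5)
Your proof is correct. The paper does not prove Lemma~\ref{Che} itself but simply cites it as a standard Chernoff bound from the textbooks of Alon--Spencer and Janson--{\L}uczak--Ruci\'nski; your argument is precisely the standard exponential-moment derivation one finds there, yielding $\Pr[X\ge(1+a)\mu]\le e^{-\mu\varphi(a)}$ with $\varphi(a)=(1+a)\ln(1+a)-a$, followed by the elementary calculus check that $\varphi(a)>a^2/3$ on $(0,3/2)$. The bookkeeping for $g(a)=\varphi(a)-a^2/3$ is sound: $g''$ changes sign once at $a=1/2$, so $g'$ (which vanishes at $0$) is unimodal, whence $g$ is increasing-then-decreasing on $[0,3/2]$ and its strict positivity on the open interval follows from $g(0)=0$, $g(3/2)=\tfrac52\ln\tfrac52-\tfrac94>0$, and strict monotonicity on each piece; the strict inequality in the conclusion then follows since $\mu>0$.
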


The next lemma is a concentration  inequality for the edge exposure martingale in $\mathcal G(n,p)$ which follows easily from Theorem 7.4.3 of \cite{AlonSpencer}.

\begin{lemma}
\label{lemma:martingale}
Suppose $X$ is a random variable in the probability space $\mathcal G(n,p)$ such that $|X(G)-X(H)|\le C$ if $G$ and $H$ differ in one edge. Then
\[\Pr\left[\left|X-\EE\left[X\right]\right|>C\alpha \sqrt{n^2p}\right]\le 2e^{-\frac{\alpha^2}{4}}\]
for any positive $\alpha<2\sqrt{n^2p}$.
\end{lemma}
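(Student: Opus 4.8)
The plan is to realise $X$ as the terminal value of the edge-exposure (Doob) martingale and then feed the martingale into Theorem 7.4.3 of \cite{AlonSpencer}; the one place where the hypothesis $p$ being small is used — and the reason the deviation scale is $\sqrt{n^2p}$ rather than the much weaker $\sqrt{n^2}$ given by plain Azuma — is a variance bound on the individual martingale increments.

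First I would fix an arbitrary ordering $e_1,\ldots,e_N$ of the $N=\binom{n}{2}$ pairs of $[n]$ and let $Y_i\in\{0,1\}$ be the indicator of $e_i\in E(G)$, so that the $Y_i$ are i.i.d.\ $\text{Bin}(1,p)$. Set $\mathcal F_i=\sigma(Y_1,\ldots,Y_i)$ and $X_i:=\EE[X\mid\mathcal F_i]$. Then $(X_i)_{i=0}^N$ is a martingale with $X_0=\EE[X]$ and $X_N=X$, so it suffices to control $|X_N-X_0|$.

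Next I would bound the increments. Condition on $\mathcal F_{i-1}$; the only fresh randomness at step $i$ is $Y_i$, which is $\text{Bin}(1,p)$ and independent of $\mathcal F_{i-1}$. Writing $g(y):=\EE[X\mid\mathcal F_{i-1},Y_i=y]$ for $y\in\{0,1\}$, the hypothesis that flipping the single pair $e_i$ changes $X$ by at most $C$ gives $|g(1)-g(0)|\le C$ (this transfers to the conditional expectations by averaging). Since $X_{i-1}=p\,g(1)+(1-p)\,g(0)$ and $X_i=g(Y_i)$, we get $X_i-X_{i-1}=(1-p)(g(1)-g(0))$ with probability $p$ and $X_i-X_{i-1}=-p(g(1)-g(0))$ with probability $1-p$. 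Hence $|X_i-X_{i-1}|\le C$ and $\EE\!\left[(X_i-X_{i-1})^2\mid\mathcal F_{i-1}\right]=p(1-p)(g(1)-g(0))^2\le pC^2$. Summing over $i$, the total conditional variance is at most $NpC^2\le\tfrac12 n^2pC^2$.

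Finally I would invoke Theorem 7.4.3 of \cite{AlonSpencer} (a Bernstein/Freedman-type deviation inequality for martingales with a uniform increment bound and a bound on the sum of conditional variances) with increment bound $C$ and total conditional variance at most $\tfrac12 n^2 pC^2$. Taking deviation level $C\alpha\sqrt{n^2p}$ and using the assumed range $\alpha<2\sqrt{n^2p}$ to guarantee that the lower-order (linear-in-$\lambda$) correction term is dominated by the Gaussian term, one gets $\Pr\!\left[|X-\EE[X]|>C\alpha\sqrt{n^2p}\right]\le 2e^{-\alpha^2/4}$, as claimed. The only delicate point is Step~3's bookkeeping: extracting exactly the constant $1/4$ in the exponent and exactly the hypothesis $\alpha<2\sqrt{n^2p}$ from the precise form of the cited theorem; this is routine, and all the conceptual content sits in the increment-variance estimate $\EE[(X_i-X_{i-1})^2\mid\mathcal F_{i-1}]\le pC^2$ of Step~2.
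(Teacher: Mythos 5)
Your reconstruction is sound, but it slightly misreads what the cited theorem does. In Alon--Spencer (3rd ed.), Theorem 7.4.3 is \emph{not} a generic Bernstein/Freedman inequality for arbitrary martingales with a variance budget; it is already the statement of the present lemma specialised to Lipschitz constant $C=1$ on $\mathcal G(n,p)$ (deviation scale $\sqrt{n^2p}$, tail $e^{-\alpha^2/4}$, validity range $\alpha<2\sqrt{n^2p}$). The paper therefore proves the lemma by a one-line rescaling: apply Theorem 7.4.3 to $X/C$, which is $1$-Lipschitz in the edge-flip sense. What you have written out — the edge-exposure filtration, the increment bound $|X_i-X_{i-1}|\le C$, the per-step conditional variance $\mathbb E[(X_i-X_{i-1})^2\mid\mathcal F_{i-1}]=p(1-p)(g(1)-g(0))^2\le pC^2$, summing to $\le\tfrac12 n^2pC^2$, and then a Bernstein-type conclusion — is precisely the \emph{internal} proof of Theorem 7.4.3 rather than an application of it, so your ``invoke Theorem 7.4.3'' in the final step is circular as phrased. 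None of this affects correctness: your per-step variance computation is right and is exactly the reason the deviation scale is $\sqrt{n^2p}$ rather than $n$, and your arithmetic (Freedman with $V\le\tfrac12 n^2pC^2$, $\lambda=C\alpha\sqrt{n^2p}$, and the range $\alpha<2\sqrt{n^2p}$ taming the linear-in-$\lambda$ term) does yield an exponent at least $\alpha^2/4$. So the argument is valid and essentially reproves the cited theorem; the paper simply quotes it.
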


\subsection{Galton-Watson trees and paths}\label{s-GW}
A Galton-Watson tree is a random rooted tree, constructed
recursively from the root where each node has a random number of
children and these random numbers are independent copies of some
random variable $\xi$ taking values in $\{0,1,2,\ldots\}$. We let
$\mathcal T$ denote a (random) Galton-Watson tree. We view the
children of each node as arriving in some random order, so that
$\mathcal T$ is an ordered, or plane tree.

We consider the \textit{conditioned Galton-Watson tree} $\mathcal T_t$, which is the random tree $\mathcal T$ conditioned on having exactly $t$ vertices. In symbols, $\mathcal T_t:=(\mathcal T \mid |\mathcal T|=t)$, where, for any tree $T$, $|T|$ denotes its number of vertices.

For a rooted tree $T$, the \textit{depth} $h(v)$ of a vertex $v$ is its distance to the root (in particular the root has depth $0$). We define as usual the \textit{height} of the rooted tree $T$ by $H(T):=\max\{h(v):v\in T\}$. The following lemma which appears in \cite{ADJheight} provides essentially optimal uniform sub-Gaussian upper tail bounds on $\frac{H(\mathcal T_t)}{\sqrt{t}}$ for every offspring distribution $\xi$ with finite variance.

\begin{lemma}
\label{lemma:ADJ}
Suppose that $\mathbb E\left[\xi\right]=1$ and $0<\text{Var}\left[\xi\right]<\infty$. Then there exist constants $C,c>0$ (which may depend on $\xi$) such that
\[\Pr\left[H(\mathcal T_t)\ge h\right]\le C \exp\left(-\frac{c h^2}{t}\right)\]
for all $h\ge 0$ and $t\ge 1$.
\end{lemma}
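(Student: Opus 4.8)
The plan is to reduce the statement to a sharp estimate for the \emph{unconditioned} critical Galton--Watson tree $\mathcal T$ with offspring law $\xi$ ($\mathbb E\xi=1$, $\sigma^2:=\mathrm{Var}(\xi)\in(0,\infty)$), and then analyse that estimate via generating functions. We may assume $h\ge1$ and that $t$ lies in the support of $|\mathcal T|$, and it suffices to treat $h\ge K\sqrt t$ for a large constant $K=K(\xi)$: for $h\le K\sqrt t$ the right-hand side is at least $Ce^{-cK^2}$, so taking $C$ large makes the bound trivial there.

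\emph{Step 1: reduction to $\mathcal T$.} By the Otter--Dwass formula $\Pr[|\mathcal T|=t]=t^{-1}\Pr[\eta_1+\dots+\eta_t=t-1]$ for i.i.d.\ $\eta_i\sim\xi$; since the centred lattice random walk with steps $\xi-1$ has finite positive variance, the local central limit theorem gives $\Pr[|\mathcal T|=t]\ge c_0 t^{-3/2}$ for all large $t$ in the support. Consequently
\[
\Pr[H(\mathcal T_t)\ge h]=\frac{\Pr[H(\mathcal T)\ge h,\,|\mathcal T|=t]}{\Pr[|\mathcal T|=t]}\le c_0^{-1}\,t^{3/2}\,A(h,t),\qquad A(h,t):=\Pr[H(\mathcal T)\ge h,\,|\mathcal T|=t],
\]
so it is enough to prove the crux estimate $A(h,t)\le C_1\,h^2\,t^{-5/2}\,e^{-c_1 h^2/t}$: plugging it into the displayed inequality, the factor $h^2/t$ is absorbed into a slightly smaller exponential constant via $\sup_{u>0}ue^{-\delta u}<\infty$, which yields the lemma.

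\emph{Step 2: generating functions.} Write $\phi(x)=\mathbb E[x^{\xi}]$, let $f=\mathbb E[x^{|\mathcal T|}]$ be the minimal solution of $f=x\phi(f)$, and $y_i(x)=\mathbb E[x^{|\mathcal T|}\mathbf 1\{H(\mathcal T)\le i\}]$, so that $y_{-1}\equiv0$ and $y_i=x\phi(y_{i-1})\uparrow f$. Then $A(h,t)=[x^t]\,d_{h-1}$, where $d_i:=f-y_i$ satisfies
\[
d_i(x)=x\bigl(\phi(f(x))-\phi(f(x)-d_{i-1}(x))\bigr),\qquad d_{-1}=f,\qquad d_i(1)=\Pr[H(\mathcal T)\ge i+1].
\]
Near the singularity $x=1$ one has $1-f(x)\sim\sigma^{-1}\sqrt{2(1-x)}$, the ``linear rate'' $x\phi'(f(x))=1-\sigma\sqrt{2(1-x)}+o(\sqrt{1-x})$ is strictly below $1$, and the quadratic coefficient $\tfrac{x}{2}\phi''(f(x))\to\sigma^2/2$. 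Solving the resulting recursion for $1/d_i$ shows that, uniformly for $x$ near $1$,
\[
d_{h-1}(x)\ \asymp\ \frac{\sqrt{1-x}}{\exp\!\bigl(\sigma\sqrt{2(1-x)}\,h\bigr)-1}\ =\ \sqrt{1-x}\sum_{k\ge1}\exp\!\bigl(-k\sigma\sqrt2\,h\sqrt{1-x}\bigr)\quad(\text{up to constants}),
\]
which in particular recovers $d_{h-1}(1)\asymp 1/h$ as $x\to1$. Feeding this into the transfer theorem --- the lattice, and crucially \emph{parameter-uniform}, coefficient asymptotics $[x^t]\bigl((1-x)^{1/2}e^{-\beta\sqrt{1-x}}\bigr)\asymp\beta^2 t^{-5/2}e^{-\beta^2/(4t)}$ for $\beta^2\gtrsim t$, i.e.\ the coefficient version of the one-sided $\tfrac12$-stable density --- term by term (the $k=1$ term dominating once $h\ge K\sqrt t$) gives $A(h,t)\asymp h^2 t^{-5/2}e^{-\sigma^2 h^2/(2t)}$, which is the required bound.

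The main obstacle is sharpness, i.e.\ not losing the factor $h$ hidden in $d_{h-1}(1)\asymp1/h$. Indeed, the crude coefficientwise bound $d_{h-1}=x^h f\prod_{i<h}\psi_i\le x^h f\,\phi'(f)^h$, where $\psi_i:=\bigl(\phi(f)-\phi(f-d_{i-1})\bigr)/d_{i-1}\le\phi'(f)$, replaces $\prod_{i<h}\psi_i(1)=\Pr[H(\mathcal T)\ge h]\asymp1/h$ by $\phi'(1)^h=1$ and hence overshoots by $\Theta(h)$; after the multiplication by $t^{3/2}$ in Step 1 this ruins the estimate precisely when $h$ is of order $\sqrt t$. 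One must therefore track the telescoping product $\prod_{i<h}\psi_i(x)$ (equivalently, solve the $d_i$-recursion) through the whole square-root-singular régime at $x=1$, and combine it with a form of the stable-subordinator transfer theorem that is uniform in its parameter $\beta=\Theta(h)\to\infty$. A parallel probabilistic route leads to the same estimate: conditioning on $\{H(\mathcal T)\ge h\}$ and decomposing along the spine (the leftmost path to depth $h$), one has $|\mathcal T|=(h+1)+\Sigma$ with $\Sigma$ a sum of $\Theta(h)$ essentially i.i.d.\ unconstrained Galton--Watson subtree sizes (the left-hanging subtrees obey a harmless height restriction that can be dropped, and the law of the spine offspring is controlled via Kesten's size-biased tree); the needed local large-deviation bound $\Pr[\Sigma=s]\le C h s^{-3/2}e^{-c h^2/s}$ then follows by exponentially tilting the heavy tail $\Pr[|\mathcal T|\ge s]\asymp s^{-1/2}$ down to finite variance and applying an ordinary local limit theorem to the tilted sum. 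Either way, only the upper tail of $H(\mathcal T_t)$ enters --- the lower-tail half of the result of \cite{ADJheight} is not needed here.
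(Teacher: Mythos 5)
The paper does not prove this lemma at all: it is quoted from Addario-Berry, Devroye and Janson \cite{ADJheight}, so there is no in-paper argument to compare yours against. That said, your sketch is a technically literate reconstruction of how such a bound is proved, and the quantitative targets are right: the Otter--Dwass/local-CLT reduction to the joint estimate $A(h,t)\lesssim h^2t^{-5/2}e^{-ch^2/t}$, the recursion $d_i=x\bigl(\phi(f)-\phi(f-d_{i-1})\bigr)$ with its solution $d_{h-1}(x)\asymp\sqrt{1-x}\big/\bigl(e^{\sigma\sqrt{2(1-x)}\,h}-1\bigr)$, and your diagnosis that a crude coefficient bound overshoots by $\Theta(h)$, which is fatal precisely when $h\asymp\sqrt t$, all match the Flajolet--Gao--Odlyzko--Richmond analysis and the Brownian-excursion prediction.

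The genuine gap is in the transfer step of your primary route. Singularity analysis requires $f$, hence $\phi$, to be analytic in a Delta-domain extending beyond the closed unit disk, i.e.\ it requires $\xi$ to have exponential moments; the lemma assumes only $0<\mathrm{Var}(\xi)<\infty$, under which $f$ need not continue past $|x|=1$ and the expansion $1-f(x)\sim\sigma^{-1}\sqrt{2(1-x)}$ is available only inside the disk. Real-variable substitutes (the bound $[x^t]g\le g(r)r^{-t}$, or Tauberian theorems) either lose exactly the polynomial factor you identified or yield only averaged coefficient information. Removing this analyticity hypothesis is the very point of \cite{ADJheight}, so your probabilistic fallback is the route that must carry the proof in the stated generality --- but it is only gestured at, and the parenthetical that the height restrictions on the left-hanging subtrees are ``harmless'' is the one place where real care is needed: those restrictions are what make the leftmost spine unique and produce the factor $\Pr[H(\mathcal T)\ge h]\asymp 1/h$; discarding them outright replaces $\Pr[H\ge h,\,|\mathcal T|=t]$ by $\mathbb E\bigl[Z_h\mathbf 1\{|\mathcal T|=t\}\bigr]$ (with $Z_h$ the number of vertices at depth $h$) and loses a factor of $h$, again fatal for $h\asymp\sqrt t$. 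What one actually shows is that, conditionally on the spine data, the restrictions hold simultaneously with probability bounded below, so removing them changes $\Pr[\Sigma=s]$ only by a constant factor. Finally, note that for the only case this paper uses --- Poisson offspring, where $\phi$ is entire and $\mathcal T_t$ is the uniform rooted labelled tree --- your analytic route is legitimate as written, and the height distribution is classical (R\'enyi--Szekeres); it is only the general finite-variance statement that forces the probabilistic argument.
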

As is well known, the distribution of the tree $\mathcal T_t$ is not changed if $\xi$ is replaced by another random variable $\xi'$ whose distribution is created from that of $\xi$  by \textit{tilting} or \textit{conjugation} (see e.g. \cite{kennedy}): if for every $k\ge 0$ we have $\Pr\left[\xi'=k\right]=c' \mu^{k}\Pr\left[\xi=k\right]$ for some $\mu>0$ and normalizing constant $c'$. Thus, we see that Lemma \ref{lemma:ADJ} remains true for $\xi\sim \text{Poisson}(\mu)$, with $\mu>0$, in which case the parameters $C,c>0$ are universal constants which do not depend on the parameter $\mu$. It is also well known (see e.g. Section $6.4$ of \cite{devroye}) that if $\xi\sim \text{Poisson}(\mu)$ then $\mathcal T_t$ is distributed as a random rooted labelled tree, that is, a tree picked uniformly from the $t^{t-1}$ trees on vertices $\{1,2,\ldots,t\}$ in which one vertex is declared to be the root. From this we obtain an estimate to be used by us later.
\begin{lemma}
\label{lemma:estimate}
Given $0\le \ell \le t$ let $p_{t,\ell}$ denote the proportion of (rooted) labeled trees on $t$ vertices which contain a path of length at least $\ell$. There exist constants $C,\varepsilon_0>0$ such that for any $\varepsilon\in (0, \varepsilon_0)$ if $\ell=\frac{C}{\varepsilon}\ln\left(\frac{1}{\varepsilon}\right)$ and $t_0=\frac{15}{\varepsilon^2}\ln \left(\frac{1}{\varepsilon}\right)$ then
\[\sum_{\ell \le t\le t_0}p_{t,\ell}\le \varepsilon^3\]
\end{lemma}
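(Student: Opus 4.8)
The plan is to exploit the identification, recalled in Section~\ref{s-GW}, of a uniformly random rooted labeled tree on $t$ vertices with the conditioned Galton--Watson tree $\mathcal T_t$ whose offspring distribution is $\xi\sim\mathrm{Poisson}(1)$, and then to control the probability that such a tree contains a long path by the probability that it is \emph{tall}, so that the sub-Gaussian height bound of Lemma~\ref{lemma:ADJ} applies. Note that $\xi\sim\mathrm{Poisson}(1)$ has mean $1$ and variance $1$, so Lemma~\ref{lemma:ADJ} is available with some constants $C_1,c_1>0$.

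First I would record a purely deterministic observation: if a rooted tree $T$ contains a simple path $v_0v_1\cdots v_{\ell'}$ of length $\ell'$, then $H(T)\ge \ell'/2$. Indeed, as one walks along the path the depth $h(v_i)$ changes by exactly $\pm 1$ at each step, so the path has a unique vertex $w$ of minimum depth and is the concatenation of two downward paths emanating from $w$ (each step of which strictly increases the depth) of lengths $a$ and $b$ with $a+b=\ell'$; since a downward path from $w$ has length at most $H(T)-h(w)\le H(T)$, we conclude $\ell'=a+b\le 2H(T)$. Consequently, since $\mathcal T_t$ is uniform over the rooted labeled trees on $t$ vertices,
\[
p_{t,\ell}=\Pr\!\left[\mathcal T_t\text{ contains a path of length }\ge\ell\right]\le\Pr\!\left[H(\mathcal T_t)\ge \ell/2\right].
\]

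Next I would apply Lemma~\ref{lemma:ADJ}, getting $p_{t,\ell}\le C_1\exp\!\big(-c_1\ell^2/(4t)\big)$ for all $t\ge 1$. For $t\le t_0=\tfrac{15}{\varepsilon^2}\ln(1/\varepsilon)$ and $\ell=\tfrac{C}{\varepsilon}\ln(1/\varepsilon)$ one has $\ell^2/(4t)\ge \ell^2/(4t_0)=\tfrac{C^2}{60}\ln(1/\varepsilon)$, hence $p_{t,\ell}\le C_1\,\varepsilon^{\,c_1C^2/60}$ uniformly over such $t$. Summing over the at most $t_0$ integers $t$ with $\ell\le t\le t_0$ gives
\[
\sum_{\ell\le t\le t_0}p_{t,\ell}\ \le\ t_0\,C_1\,\varepsilon^{\,c_1C^2/60}\ =\ 15\,C_1\,\varepsilon^{\,c_1C^2/60-2}\ln(1/\varepsilon).
\]
It then remains to fix the constants: choose $C$ large enough (in terms of the universal constants $c_1,C_1$) that $c_1C^2/60-2\ge 4$, so the right-hand side is at most $15C_1\,\varepsilon^4\ln(1/\varepsilon)$; then take $\varepsilon_0>0$ small enough that this is at most $\varepsilon^3$ for every $\varepsilon\in(0,\varepsilon_0)$, and also small enough that $\varepsilon_0<15/C$ so that the summation range $\ell\le t\le t_0$ is non-vacuous (i.e. $\ell\le t_0$).

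The argument is essentially routine; the only genuine content is the deterministic height-versus-path inequality, and the only point needing care is the order of quantifiers on the constants — $C$ must be taken large to make the tail estimate strong enough, but since the exponent in Lemma~\ref{lemma:ADJ} is quadratic in the height this causes no conflict, and the side condition $\varepsilon_0<15/C$ keeps everything consistent. I do not anticipate a substantial obstacle.
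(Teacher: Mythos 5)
Your proof is correct and follows essentially the same route as the paper's: translate $p_{t,\ell}$ into a uniform bound via the conditioned Poisson Galton--Watson identification and Lemma~\ref{lemma:ADJ}, bound uniformly for $t\le t_0$, and sum trivially over the at most $t_0$ values of $t$. The only difference is cosmetic: you make explicit the deterministic inequality $H(T)\ge\ell'/2$ for a tree containing a path of length $\ell'$, which the paper leaves implicit (absorbing the factor $4$ into the constant $c'$), and this is a welcome clarification.
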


\begin{proof}[Proof of Lemma ~\ref{lemma:estimate}]
It follows from Lemma \ref{lemma:ADJ} and the considerations above that there exist constants $C',c'>0$ such that for every $t\le t_0$:
\begin{equation*}
p_{t,\ell}\le C' \exp\left(-\frac{c'\ell^2}{t}\right)\le C'\exp\left(-\frac{c'\left(\frac{C}{\varepsilon}\ln\left(\frac{1}{\varepsilon}\right)\right)^2}{\frac{15}{\varepsilon^2}\ln\left(\frac{1}{\varepsilon}\right)}\right)=C' \varepsilon^{\frac{c' C^2}{15}}.
\end{equation*}
Thus, if $C>\sqrt{\frac{90}{c'}}$ and if $\varepsilon_0$ is sufficiently small then we see that for any $\varepsilon\in (0, \varepsilon_0)$ and for $t\le t_0$ we have $p_{t,\ell}\le \varepsilon^6$. Using this we conclude that
\[\sum_{\ell \le t\le t_0}p_{t,\ell}\le \varepsilon^6\cdot t_0 =15\varepsilon^4\ln \left(\frac{1}{\varepsilon}\right)\le \varepsilon^3\,,\]
provided $\varepsilon_0$ is sufficiently small, as claimed.
\end{proof}

The next lemma concerns the sizes of Poisson Galton-Watson trees which contain long paths.

\begin{lemma}
\label{lemma:treeswithlongpaths}
For $\varepsilon>0$ let $0<\mu<1$ be such that $\mu e^{-\mu}=(1+\varepsilon)e^{-(1+\varepsilon)}$. Given $\ell\ge 1$ consider a $\text{Poisson}(\mu)$-Galton-Watson tree $\mathcal T$ and the random variable
\[T_{\ell}:=\left\{\begin{matrix}
|\mathcal T| & \text{ if } \mathcal T\text{ contains a path of length at least }\frac{\ell}{3}\\
0 & \text{ otherwise}\,,
\end{matrix}\right.\]
where $|\mathcal T|$ denotes the number of vertices of $\mathcal T$. Then there exist constants $C,\varepsilon_0>0$ such that for every $\varepsilon\in (0, \varepsilon_0)$ and for $\ell=\frac{C}{\varepsilon}\ln\left(\frac{1}{\varepsilon}\right)$ we have $\EE\left[T_{\ell}\right]\le 14\varepsilon^3$ and $\text{Var}\left[T_{\ell}\right]\le \frac{8}{\varepsilon^3}$.
\end{lemma}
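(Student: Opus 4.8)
The plan is to reduce the lemma to two structural inputs: the distribution of $|\mathcal{T}|$, which is the total progeny of a subcritical $\text{Poisson}(\mu)$ branching process (a Borel distribution), and the fact recalled just above the statement, that the conditioned tree $\mathcal{T}_t$ is distributed as a uniformly random rooted labelled tree on $t$ vertices. The second fact says precisely that, in the notation of Lemma~\ref{lemma:estimate},
\[
\Pr\bigl[\,\mathcal{T}\text{ contains a path of length}\ge\tfrac{\ell}{3}\ \big|\ |\mathcal{T}|=t\,\bigr]=p_{t,\lceil\ell/3\rceil}=:q_t,
\]
so that, writing the indicator out, $\EE[T_{\ell}]=\sum_{t\ge1}t\,\Pr[|\mathcal{T}|=t]\,q_t$ and $\EE[T_{\ell}^2]=\sum_{t\ge1}t^2\,\Pr[|\mathcal{T}|=t]\,q_t$. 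Everything then follows from controlling the weights $t\Pr[|\mathcal{T}|=t]$ and the conditional probabilities $q_t$.

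The variance is immediate. Bounding $q_t\le1$ gives $\text{Var}[T_{\ell}]\le\EE[T_{\ell}^2]\le\EE[|\mathcal{T}|^2]$, and the standard moments of the Borel distribution, $\EE[|\mathcal{T}|]=(1-\mu)^{-1}$ and $\text{Var}[|\mathcal{T}|]=\mu(1-\mu)^{-3}$, yield $\EE[|\mathcal{T}|^2]=\text{Var}[|\mathcal{T}|]+\EE[|\mathcal{T}|]^2=(1-\mu)^{-3}$. Multiplying the defining relation for $\mu$ by $e$ gives $\mu e^{1-\mu}=(1+\varepsilon)e^{-\varepsilon}$, and an elementary expansion of $(1-\delta)e^{\delta}=(1+\varepsilon)e^{-\varepsilon}$ with $\delta=1-\mu$ shows $1-\mu=\varepsilon(1+O(\varepsilon))$, in particular $1-\mu\ge\varepsilon/2$ once $\varepsilon_0$ is small. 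Hence $\text{Var}[T_{\ell}]\le(1-\mu)^{-3}\le8/\varepsilon^3$, as claimed.

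For the expectation I would first record, using $t!\ge\sqrt{2\pi t}\,(t/e)^t$ and $\mu e^{1-\mu}=(1+\varepsilon)e^{-\varepsilon}$, the pointwise bound
\[
t\,\Pr[|\mathcal{T}|=t]=\frac{\mu^{t-1}t^{t}e^{-\mu t}}{t!}\ \le\ \frac{\bigl((1+\varepsilon)e^{-\varepsilon}\bigr)^{t}}{\mu\sqrt{2\pi t}}\ \le\ 1\qquad(t\ge1),
\]
the last step using $\mu>1/2$ and $(1+\varepsilon)e^{-\varepsilon}<1$. Now split $\EE[T_{\ell}]=\sum_t t\Pr[|\mathcal{T}|=t]q_t$ at $t_0=\tfrac{15}{\varepsilon^2}\ln(1/\varepsilon)$. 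For $t\le t_0$, bound $t\Pr[|\mathcal{T}|=t]\le1$ and apply Lemma~\ref{lemma:estimate}: choosing the constant $C$ of the present lemma to be at least three times the constant of Lemma~\ref{lemma:estimate} forces $\lceil\ell/3\rceil$ to be at least the path-length parameter of that lemma, and since $p_{t,m}$ is nonincreasing in $m$ we get $\sum_{t\le t_0}q_t\le\varepsilon^3$, hence $\sum_{t\le t_0}t\Pr[|\mathcal{T}|=t]q_t\le\varepsilon^3$. For $t>t_0$, bound $q_t\le1$ and combine the pointwise estimate with $(1+\varepsilon)e^{-\varepsilon}\le e^{-\varepsilon^2/3}$ and $1-(1+\varepsilon)e^{-\varepsilon}\ge\varepsilon^2/4$ (both valid for small $\varepsilon$):
\[
\sum_{t>t_0}t\,\Pr[|\mathcal{T}|=t]\ \le\ \frac{1}{\mu\sqrt{2\pi t_0}}\sum_{t>t_0}\bigl((1+\varepsilon)e^{-\varepsilon}\bigr)^{t}\ \le\ \frac{1}{\mu\sqrt{2\pi t_0}}\cdot\frac{e^{-\varepsilon^2t_0/3}}{\varepsilon^2/4}\ =\ O(\varepsilon^{4}),
\]
since $e^{-\varepsilon^2t_0/3}=\varepsilon^{5}$ and $\sqrt{t_0}=\Theta(\varepsilon^{-1}\sqrt{\ln(1/\varepsilon)})$. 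Adding the two ranges gives $\EE[T_{\ell}]\le\varepsilon^3+O(\varepsilon^4)\le14\varepsilon^3$ for $\varepsilon_0$ small.

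Once these two inputs are in place the argument is essentially bookkeeping, and the step needing the most care is the tail estimate for $t>t_0$. It works precisely because the cutoff is of order $\varepsilon^{-2}\ln(1/\varepsilon)$: the geometric ratio $(1+\varepsilon)e^{-\varepsilon}$ differs from $1$ by $\Theta(\varepsilon^2)$, so $\bigl((1+\varepsilon)e^{-\varepsilon}\bigr)^{t_0}$ is polynomially small in $\varepsilon$ (here $\varepsilon^5$, thanks to the constant $15$), which easily absorbs the $\varepsilon^{-2}$ loss coming from $1-(1+\varepsilon)e^{-\varepsilon}$ in the denominator of the geometric sum. The only other point to check is that $C$ is taken large enough for Lemma~\ref{lemma:estimate} (equivalently Lemma~\ref{lemma:ADJ}) to apply to paths of length $\ell/3$ rather than $\ell$; this costs only a constant factor in $C$ and affects nothing downstream.
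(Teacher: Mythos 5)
Your proof is correct and follows essentially the same route as the paper's: condition on $|\mathcal T|\sim\text{Borel}(\mu)$, use the identification of $\mathcal T_t$ with a uniform random rooted labelled tree to write $\EE[T_\ell]=\sum_t t\Pr[|\mathcal T|=t]\,p_{t,\ell/3}$, split at $t_0=\tfrac{15}{\varepsilon^2}\ln(1/\varepsilon)$, control the small-$t$ range via Lemma~\ref{lemma:estimate} and the large-$t$ range by a geometric tail, and bound the variance by $\EE[|\mathcal T|^2]=(1-\mu)^{-3}\le 8/\varepsilon^3$. The only (inessential) difference is that you keep the $\sqrt{2\pi t}$ factor from Stirling where the paper uses the cruder $t^t/t!\le e^t$, and you are more explicit about absorbing the factor of $3$ into the constant $C$ of Lemma~\ref{lemma:estimate}.
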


\begin{proof}
We have
\begin{equation}
\label{eq:expectedvalue}
\EE\left[T_{\ell}\right]=\EE\left[\EE\left[T_{\ell}\mid |\mathcal T|\right]\right]=\sum_{t\ge 1}\Pr\left[|\mathcal T|=t\right]\cdot \EE\left[T_{\ell}\mid |\mathcal T|=t\right].
\end{equation}
It is well-known (see, e.g., Section $6.6$ of \cite{devroye}) that the size of the $\text{Poisson}(\mu)$-Galton-Watson tree $\mathcal T$ follows a $\text{Borel}(\mu)$ distribution, namely,
\begin{equation*}
\label{eq:borel}
\Pr\left[|\mathcal T|=t\right]=\frac{t^{t-1}\left(\mu e^{-\mu}\right)^{t}}{\mu\cdot  t!}.
\end{equation*}
Moreover, as discussed in the remarks that follow Lemma \ref{lemma:ADJ}, if we condition a $\text{Poisson}(\mu)$-Galton-Watson tree on it having exactly $t$ vertices then it is identically distributed to a random rooted labelled tree on $t$ vertices. Thus, it follows that $\EE\left[T_{\ell}\mid |\mathcal T|=t\right]$ is equal to $t\cdot p_{t,\frac{\ell}{3}}$, where $p_{t,\frac{\ell}{3}}$ denotes the proportion of rooted labeled trees on $t$ vertices which contain a path of length at least $\frac{\ell}{3}$. Hence, setting $t_0:=\frac{15}{\varepsilon^2}\ln\left(\frac{1}{\varepsilon}\right)$ with foresight, it follows from (\ref{eq:expectedvalue}) that
\begin{align}
\EE\left[T_{\ell}\right]&=\sum_{t\ge 1}\frac{t^{t-1}\left(\mu e^{-\mu}\right)^{t}}{\mu\cdot  t!}\cdot t\cdot p_{t,\frac{\ell}{3}}\notag\\
&\le \frac{1}{\mu}\sum_{t\ge \frac{\ell}{3}}\frac{t^t}{t!}\cdot (1+\varepsilon)^t\cdot e^{-(1+\varepsilon)t}\cdot p_{t,\frac{\ell}{3}}\notag\\
&\le 2\sum_{t\ge \frac{\ell}{3}}e^{-\frac{\varepsilon^2}{3}t}\cdot p_{t,\frac{\ell}{3}}\notag\\
&\le 2\cdot \left(\sum_{\frac{\ell}{3}\le t\le t_0} p_{t,\frac{\ell}{3}}+\sum_{t\ge t_0}e^{-\frac{\varepsilon^2}{3}t}\right)\,,
\label{eq:twosums}
\end{align}
where in the second inequality we used the facts that $\frac{t^t}{t!}\le e^{t}$, that $(1+\varepsilon)^{t}\le e^{\varepsilon t-\frac{\varepsilon^2}{3}t}$ (which holds since the first terms of the Taylor series expansion of $\ln (1+\varepsilon)$ are $\varepsilon-\frac{\varepsilon^2}{2}$) and that $\frac{1}{\mu}\le 2$ provided $\varepsilon_0$ is chosen sufficiently small. By Lemma \ref{lemma:estimate} there exist constants $C,\varepsilon_0>0$ such that the first sum in (\ref{eq:twosums}) is at most $\varepsilon^3$. Moreover, the second sum in (\ref{eq:twosums}) is
\begin{equation}
\label{eq:upperbound}
\sum_{t\ge t_0}e^{-\frac{\varepsilon^2}{3}t}= e^{-\frac{\varepsilon^2}{3} t_0}\cdot  \frac{1}{1-e^{-\frac{\varepsilon^2}{3}}}\le\varepsilon^5 \cdot \frac{6}{\varepsilon^2}=6\varepsilon^3\,,
\end{equation}
where we used the fact that $\frac{1}{1-e^{-x}}\le \frac{2}{x}$ for $x>0$ sufficiently small (which holds since the first terms of the Taylor series expansion of $e^{-x}$ are $1-x$). Thus, all in all, we conclude that there exist constants $C,\varepsilon_0>0$ such that
\[\EE\left[T_{\ell}\right]\le 2 \cdot (\varepsilon^3+6\varepsilon^3)= 14\varepsilon^3\]
as claimed. Since $|\mathcal T|\sim \text{Borel}(\mu)$ it follows that
\[\text{Var}\left[T_{\ell}\right]\le \EE\left[T^2_{\ell}\right]\le \EE\left[|\mathcal T|^2\right]=\frac{1}{(1-\mu)^3}.\]
Morever, it is straightforward to check that if $\varepsilon_0$ is chosen sufficiently small then $\mu\le 1-
\frac{\varepsilon}{2}$. Thus, we conclude that
\[\text{Var}\left[T_{\ell}\right]\le \frac{8}{\varepsilon^3}\]
as claimed.
\end{proof}

\begin{lemma}
\label{lemma:vertexdisjointpaths}
Let $P=(V,E)$ be a path of length $\ell$ and $B\subseteq E$ a set of size $|B|\le \alpha\ell$, where $\alpha\ge \frac{1}{\ell}$. Let $Q$ denote the graph obtained from $P$ by deleting all the edges in $B$. Then there exist vertex disjoint subpaths $\{Q^j\}_{j\in J}$ of $Q$ such that each $Q^j$ has length at least $\frac{1}{3\alpha}$ and the subpaths $\{Q^j\}_{i\in J}$ cover at least $\left(\frac{1}{3}-\alpha\right)\ell$ vertices of $V$.
\end{lemma}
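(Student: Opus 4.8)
The plan is to delete the edges of $B$ from $P$, producing a disjoint union of subpaths, and then throw away the short ones. Deleting $|B|\le\alpha\ell$ edges from the path $P$ on $\ell+1$ vertices breaks it into at most $|B|+1\le\alpha\ell+1$ subpaths whose vertex sets partition $V$; since $\alpha\ge\frac1\ell$ we have $\alpha\ell+1\le 2\alpha\ell$, so there are at most $2\alpha\ell$ subpaths in total. Call a subpath \emph{long} if it has length at least $\frac1{3\alpha}$ and \emph{short} otherwise, and let $\{Q^j\}_{j\in J}$ be the long ones; these are clearly vertex disjoint subpaths of $Q$ with the required minimum length. It remains to bound from below the number of vertices they cover.

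First I would account for the vertices \emph{not} on any $Q^j$. Such a vertex lies either on a short subpath or is an endpoint of a deleted edge that got separated; more simply, every vertex of $V$ lies on exactly one of the (at most $2\alpha\ell$) subpaths in the decomposition of $Q$, so the vertices missed are exactly those on short subpaths. Each short subpath has length at most $\frac1{3\alpha}$ and hence at most $\frac1{3\alpha}+1\le\frac2{3\alpha}$ vertices (using $\alpha\ge\frac1\ell$, or more crudely that $\frac1{3\alpha}\ge\frac13$ is not large — in fact one just needs $\frac1{3\alpha}+1\le\frac{2}{3\alpha}$, which holds once $\frac1{3\alpha}\ge 3$; a cleaner route is to bound the vertex count of a short subpath by $\frac{1}{3\alpha}$ edges plus $1$ and absorb the $+1$ into the count of subpaths). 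Multiplying the number of short subpaths, at most $2\alpha\ell$, by the per-subpath vertex bound $\le\frac1{3\alpha}+1$ gives at most $2\alpha\ell\cdot\frac1{3\alpha}+2\alpha\ell=\frac{2\ell}{3}+2\alpha\ell$ vertices on short subpaths. Hence the long subpaths cover at least $(\ell+1)-\frac{2\ell}{3}-2\alpha\ell\ge\left(\frac13-2\alpha\right)\ell$ vertices.

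This bound has $2\alpha$ where the statement asks for $\alpha$, so I would tighten the counting: instead of bounding the number of short subpaths by the total $2\alpha\ell$, bound it by $|B|+1\le\alpha\ell+1$ more carefully, and bound the vertices on each short subpath by its number of edges, which is at most $\frac1{3\alpha}$. The total number of edges lying on short subpaths is at most $\ell$ (all edges of $Q$), but more usefully: the short subpaths number at most $\alpha\ell+1$ and the vertices on them number (number of short subpaths) plus (number of edges on short subpaths) $\le (\alpha\ell+1) + \ell\cdot\frac{?}{}$ — this is getting circular, so the clean argument is: vertices on a short subpath $\le$ (edges of that subpath)$+1$, summing over $\le\alpha\ell+1$ short subpaths gives $\le(\text{edges on short subpaths})+\alpha\ell+1$. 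The edges on short subpaths total at most $\ell$, which is too weak; instead bound edges on short subpaths by (number of short subpaths)$\times\frac1{3\alpha}\le(\alpha\ell+1)\cdot\frac1{3\alpha}=\frac\ell3+\frac1{3\alpha}\le\frac\ell3+\frac\ell3$, so vertices on short subpaths $\le\frac{2\ell}{3}+\alpha\ell+1+\ldots$ — the arithmetic is delicate and this is exactly the step I expect to be the main (purely computational) obstacle: getting the constant down to $\frac13-\alpha$ rather than $\frac13-2\alpha$. I would handle it by noting $\ell+1$ vertices minus at most $|B|=\alpha\ell$ "break points" minus the interior vertices of short subpaths, where the short subpaths between consecutive break points each contribute fewer than $\frac1{3\alpha}$ edges, so the short subpaths contain fewer than $(\alpha\ell)\cdot\frac1{3\alpha}=\frac\ell3$ edges in total and hence fewer than $\frac\ell3$ non-break vertices; combined with removing the $\le\alpha\ell$ break vertices this leaves at least $\ell+1-\frac\ell3-\alpha\ell\ge\left(\frac13-\alpha\right)\ell$ vertices on long subpaths, as desired. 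The only genuinely non-routine point is keeping the bookkeeping of "subpath count vs.\ edge count vs.\ break-point count" consistent so the leading $\frac13$ and the linear-in-$\alpha$ error come out exactly right; everything else is immediate.
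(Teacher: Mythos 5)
Your proposal is the same decomposition the paper uses: split $Q$ into its $k\le |B|+1\le\alpha\ell+1$ components, keep those of length at least $\frac1{3\alpha}$, and bound what the short ones can eat. The paper, however, does the accounting entirely in terms of \emph{edges}, which dissolves the bookkeeping you struggle with in your second half. Writing $\ell_j$ for the number of edges of the $j$-th component, the paper notes $\sum_j \ell_j=\ell-|B|\ge(1-\alpha)\ell$ and $\sum_{j\notin J}\ell_j\le k\cdot\frac1{3\alpha}\le\frac{\ell}{3}+\frac1{3\alpha}\le\frac{2\ell}{3}$ (the last step using $\alpha\ge\frac1\ell$), so $\sum_{j\in J}\ell_j\ge\left(\frac13-\alpha\right)\ell$; since a path covers at least as many vertices as it has edges, this already gives the stated vertex count. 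By contrast you count vertices directly, which forces you to track the ``$+1$'' per component and the number of short components separately, and your final paragraph leans on an undefined notion of ``break points.'' Concretely, your intermediate claim that there are at most $\alpha\ell$ short subpaths contributing at most $\frac\ell3$ edges is not quite what you have: the number of components is at most $\alpha\ell+1$, so the edges on short subpaths are only bounded by $\frac\ell3+\frac1{3\alpha}$ and the short subpaths themselves contribute up to $\alpha\ell+1$ extra vertices. Carrying these through (and again using $\frac1{3\alpha}\le\frac\ell3$) still lands at $\left(\frac13-\alpha\right)\ell$, so the conclusion is fine; but the edge-only version is the clean way to see it, and it also removes the need to treat separately the degenerate case where every component is short.
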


\begin{proof}[Proof of Lemma \ref{lemma:vertexdisjointpaths}]
Since $P$ is a path, $Q$ consists of a union of vertex disjoint paths $\{Q^{j}\}_{j\in [k]}$ for some $k\le |B|+1\le \alpha \ell +1$. Denoting by $\ell_{j}$ the length of the path $Q^{j}$ for $j\in [k]$, note that
\begin{equation}
\label{eq:lengthsofpaths1}
\sum_{j\in [k]}\ell_{j}=\ell-|B|\ge (1-\alpha)\ell.
\end{equation}
Moreover, setting $J:=\{j\in [k]:\ell_{j}\ge \frac{1}{3\alpha}\}$ we see that
\begin{equation}
\label{eq:lengthsofpaths2}
\sum_{j\notin J}\ell_{j}\le k\cdot \frac{1}{3\alpha}\le \frac{1}{3}\ell+\frac{1}{3\alpha}\le \frac{2}{3}\ell.
\end{equation}
Putting (\ref{eq:lengthsofpaths1}) and (\ref{eq:lengthsofpaths2}) together we get that
\[\sum_{j\in J}\ell_{j}\ge \left(\frac{1}{3}-\alpha\right)\ell.\]
Thus, it follows that the paths $\{Q^j\}_{j\in J}$ satisfy the desired conditions.
\end{proof}

\subsection{Properties of random graphs}
The next lemma provides bounds on the sizes of the largest and second largest connected components of $G\sim \mathcal G(n,p)$ as well as the size of its $2$-core when $p=\frac{1+\varepsilon}{n}$, where $\varepsilon>0$ is a small constant. This lemma is a simple consequence of Theorem 5.4 of \cite{JLR} and Theorem 3 of \cite{2-core}.

\begin{lemma}
\label{lemma:supercriticalregime}
Let $p=\frac{1+\varepsilon}{n}$ where $\varepsilon>0$ is a constant. Then there exists a constant $\varepsilon_0>0$ such
that for every $\varepsilon\in (0, \varepsilon_0)$ the following holds whp for $G\sim \mathcal G(n,p)$:
\begin{enumerate}[$(a)$]
\item the largest connected component of $G$ has between $\varepsilon n$ and $3\varepsilon n$ vertices.

\item the second largest connected component of $G$ has at most $\frac{20}{\varepsilon^2}\ln n$ vertices.

\item the $2$-core of the largest connected component of $G$ has at most $2\varepsilon^2n$ vertices.
\end{enumerate}
\end{lemma}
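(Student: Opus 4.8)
The plan is to deduce $(a)$ and $(b)$ from the classical structural description of the supercritical phase of $\mathcal G(n,p)$ (Theorem~5.4 of \cite{JLR}) and $(c)$ from the $2$-core theorem (Theorem~3 of \cite{2-core}); in each case, once the cited result is invoked, the claim comes down to an elementary estimate on the two branching-process parameters attached to $p=\frac{1+\varepsilon}{n}$: the survival probability $\rho=\rho(\varepsilon)$ of a $\text{Poisson}(1+\varepsilon)$ Galton--Watson process --- the unique positive root of $1-\rho=e^{-(1+\varepsilon)\rho}$ --- and its conjugate $\mu$, the number $0<\mu<1$ with $\mu e^{-\mu}=(1+\varepsilon)e^{-(1+\varepsilon)}$ from Lemma~\ref{lemma:treeswithlongpaths}, which one checks equals $\mu=(1+\varepsilon)(1-\rho)$.

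For $(a)$ and $(b)$: Theorem~5.4 of \cite{JLR} states that whp the largest component $C_1$ of $G\sim\mathcal G(n,p)$ has $(1+o(1))\rho n$ vertices, while every other component has at most $(1+o(1))\frac{\ln n}{\mu-1-\ln\mu}$ vertices, so it only remains to estimate $\rho$ and $\mu$ for small $\varepsilon$. A short Taylor expansion of $1-\rho=e^{-(1+\varepsilon)\rho}$ about $\rho=0$ gives $\rho=2\varepsilon+O(\varepsilon^2)$; hence $1-\mu=1-(1+\varepsilon)(1-\rho)=\varepsilon+O(\varepsilon^2)$, and expanding the logarithm, $\mu-1-\ln\mu=\tfrac12(1-\mu)^2+O\bigl((1-\mu)^3\bigr)=\tfrac{\varepsilon^2}{2}+O(\varepsilon^3)$. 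Thus, for $\varepsilon$ below a suitable $\varepsilon_0$, one has $\tfrac32\varepsilon\le\rho\le\tfrac52\varepsilon$ and $\frac{1}{\mu-1-\ln\mu}\le\frac{3}{\varepsilon^2}$; feeding these into the two bounds from \cite{JLR} and taking $n_0$ large enough to absorb the $o(1)$ (and $o(n)$) error terms yields $\varepsilon n\le|C_1|\le 3\varepsilon n$ and the bound $\frac{20}{\varepsilon^2}\ln n$ for every other component, whp --- which are exactly $(a)$ and $(b)$.

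For $(c)$: Theorem~3 of \cite{2-core} yields that whp the $2$-core of $G\sim\mathcal G(n,p)$ has $(1+o(1))\gamma(\varepsilon)n$ vertices, where (in the notation above) $\gamma(\varepsilon)=\rho(1-\mu)$ --- equivalently $\gamma(\varepsilon)=\Pr\bigl[\text{Poisson}((1+\varepsilon)\rho)\ge 2\bigr]$, which simplifies to $\rho(1-\mu)$ via $e^{-(1+\varepsilon)\rho}=1-\rho$ and $\mu=(1+\varepsilon)(1-\rho)$. Since the vertex set of the $2$-core of $C_1$ is contained in that of the $2$-core of $G$, it is enough to check $\gamma(\varepsilon)<2\varepsilon^2$ for all small $\varepsilon$; and a slightly sharper version of the expansions above gives $\rho=2\varepsilon-\tfrac83\varepsilon^2+O(\varepsilon^3)<2\varepsilon$ and $1-\mu=\varepsilon-\tfrac23\varepsilon^2+O(\varepsilon^3)<\varepsilon$ for small $\varepsilon$, whence $\gamma(\varepsilon)=\rho(1-\mu)<2\varepsilon^2$. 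Choosing $\varepsilon_0$ small and $n_0$ large to absorb the $o(1)$ then gives $(c)$. The one point that genuinely needs care is this last estimate: a barely supercritical random graph has a $2$-core on $\sim 2\varepsilon^2 n$ vertices, so the claimed bound $2\varepsilon^2 n$ is only just met, and one must verify that the leading coefficient of $\gamma(\varepsilon)$ is exactly $2$ and its next term is negative (equivalently, $\rho<2\varepsilon$ and $\mu>1-\varepsilon$ for small $\varepsilon$), not merely that $\gamma(\varepsilon)=\Theta(\varepsilon^2)$. Everything else --- the precise quantitative forms of Theorem~5.4 of \cite{JLR} and Theorem~3 of \cite{2-core}, and the monotonicity of the $2$-core vertex count under passing from $G$ to $C_1$ --- is routine.
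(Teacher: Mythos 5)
Your proof is correct and matches the paper's intent exactly: the paper states only that the lemma ``is a simple consequence of Theorem~5.4 of~\cite{JLR} and Theorem~3 of~\cite{2-core},'' and your argument carries out precisely the elementary branching-process estimates needed to extract the stated bounds from those two theorems. In particular, the identities $\mu=(1+\varepsilon)(1-\rho)$ and $\gamma(\varepsilon)=1-e^{-(1+\varepsilon)\rho}\bigl(1+(1+\varepsilon)\rho\bigr)=\rho(1-\mu)$, the expansions $\rho=2\varepsilon-\tfrac{8}{3}\varepsilon^2+O(\varepsilon^3)$, $1-\mu=\varepsilon-\tfrac{2}{3}\varepsilon^2+O(\varepsilon^3)$, $\mu-1-\ln\mu=\tfrac{\varepsilon^2}{2}+O(\varepsilon^3)$, and the observation that $\gamma(\varepsilon)=2\varepsilon^2-4\varepsilon^3+O(\varepsilon^4)<2\varepsilon^2$ (so the sign of the $\varepsilon^3$ term matters for the tight bound in $(c)$) all check out, and you correctly use that the $2$-core of $\mathcal C_1$ is contained in the $2$-core of $G$.
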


In \cite{DLP}, Ding, Lubetzky and Peres established a complete characterization of the structure of the giant component $\mathcal C_1$ of $G\sim \mathcal G(n,p)$ in the strictly supercritical regime ($p=\frac{1+\varepsilon}{n}$ with $\varepsilon>0$ constant). This was achieved by offering a tractable contiguous model $\tilde{\mathcal{C}}_1$, i.e. a model such that every graph property that is satisfied by $\tilde{\mathcal C}_1$ whp is also satisfied by $\mathcal C_1$ whp. In their model, $\tilde{\mathcal C}_1$ consists of a $2$-core $\tilde{\mathcal C}^{(2)}_1$ where one attaches to each vertex  of $\tilde{\mathcal C}^{(2)}_1$ one independent $\text{Poisson}(\mu)$-Galton-Watson tree (where $0<\mu<1$ is such that $\mu e^{-\mu}=(1+\varepsilon)e^{-(1+\varepsilon)}$). In light of this, any graph property that is satisfied whp by the disjoint union of $|\tilde{\mathcal C}^{(2)}_1|$ independent $\text{Poisson}(\mu)$-Galton-Watson trees must also be satisfied whp by $\mathcal C_1\setminus \mathcal C_1^{(2)}$, the graph obtained from the giant component $\mathcal C_1$ by removing the edges of its $2$-core $\mathcal C_1^{(2)}$. As one would expect, the random variable $|\tilde{\mathcal C}^{(2)}_1|$ is tightly concentrated around its expectation, which agrees with the expected size of the $2$-core $\mathcal C^{(2)}_1$ of $\mathcal C_1$. By $(c)$ of Lemma \ref{lemma:supercriticalregime} this at most $2\varepsilon^2n$. The next technical lemma which will be useful in the proof of Theorem~\ref{thm:notmanyvertexdisjointpaths} follows from the considerations above.

\begin{lemma}
\label{lemma:DLP}
Let $\mathcal C_1$ denote the largest connected component of $G\sim \mathcal G(n,p)$ for $p=\frac{1+\varepsilon}{n}$, where $\varepsilon>0$ is fixed, let $\mathcal C^{(2)}_1$ denote its $2$-core and let $\mathcal C_1\setminus \mathcal C^{(2)}_1$ denote the graph obtained from $\mathcal C_1$ by removing the edges in $\mathcal C^{(2)}_1$. Let $0<\mu<1$ be such that $\mu e^{-\mu}=(1+\varepsilon)e^{-(1+\varepsilon)}$ and consider $2\varepsilon^2n$ independent $\text{Poisson}(\mu)$-Galton-Watson trees $\mathcal T_1,\ldots, \mathcal T_{2\varepsilon^2n}$. Then, for every $\ell$ and $m$ (which might depend on $n$) if whp the disjoint union of $\mathcal T_1,\ldots, \mathcal T_{2\varepsilon^2n}$ does not contain a set of vertex disjoint paths of length at least $\ell$ covering at least $m$ vertices then the same holds whp for $\mathcal C_1\setminus \mathcal C^{(2)}_{1}$.
\end{lemma}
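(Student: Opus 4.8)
The plan is to transfer the statement from $\mathcal{C}_1\setminus\mathcal{C}_1^{(2)}$ to the Ding-Lubetzky-Peres contiguous model $\tilde{\mathcal{C}}_1$ and then exploit its explicit tree structure. Since deleting the edges of the $2$-core is a deterministic graph operation, ``the graph obtained from this graph by deleting the edges of its $2$-core contains a set of vertex disjoint paths of length at least $\ell$ covering at least $m$ vertices'' is itself a graph property; hence, by the contiguity of $\mathcal{C}_1$ and $\tilde{\mathcal{C}}_1$, it suffices to show that whp $\tilde{\mathcal{C}}_1\setminus\tilde{\mathcal{C}}_1^{(2)}$ contains no such family of paths. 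Now, by the construction of the DLP model, $\tilde{\mathcal{C}}_1\setminus\tilde{\mathcal{C}}_1^{(2)}$ is exactly the vertex disjoint union of the trees hanging off the $2$-core, that is, of $N:=|\tilde{\mathcal{C}}_1^{(2)}|$ independent $\text{Poisson}(\mu)$-Galton-Watson trees (the tree rooted at each $2$-core vertex, these being vertex disjoint and together spanning all of $\tilde{\mathcal{C}}_1$); crucially, in this model the forest of hanging trees is generated independently of $\tilde{\mathcal{C}}_1^{(2)}$, and in particular of $N$.

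I would first control $N$: as recorded in the paragraph preceding the lemma, $|\tilde{\mathcal{C}}_1^{(2)}|$ is tightly concentrated about its mean, which matches $\EE\bigl[|\mathcal{C}_1^{(2)}|\bigr]$, and this mean is at most $(1+o(1))\varepsilon^2 n$ by (the proof of) part~$(c)$ of Lemma~\ref{lemma:supercriticalregime}; combining the concentration with this bound gives that whp $N\le 2\varepsilon^2 n$. I would then run a short monotonicity argument: let $\mathcal{T}_1,\mathcal{T}_2,\ldots$ be i.i.d.\ $\text{Poisson}(\mu)$-Galton-Watson trees independent of $N$, so that $\tilde{\mathcal{C}}_1\setminus\tilde{\mathcal{C}}_1^{(2)}$ is distributed as $\mathcal{T}_1\sqcup\cdots\sqcup\mathcal{T}_N$; on the event $\{N\le 2\varepsilon^2 n\}$ this forest is a subgraph of $\mathcal{T}_1\sqcup\cdots\sqcup\mathcal{T}_{2\varepsilon^2 n}$, and a family of vertex disjoint paths of length at least $\ell$ covering at least $m$ vertices in a subgraph is also such a family in the ambient graph. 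Hence the hypothesis that whp $\mathcal{T}_1\sqcup\cdots\sqcup\mathcal{T}_{2\varepsilon^2 n}$ has no such family, combined with $\{N\le 2\varepsilon^2 n\}$ holding whp, forces $\mathcal{T}_1\sqcup\cdots\sqcup\mathcal{T}_N$, and therefore $\tilde{\mathcal{C}}_1\setminus\tilde{\mathcal{C}}_1^{(2)}$, to have no such family whp. Transferring back through contiguity finishes the proof.

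The argument is genuinely short, and the one point that needs care is that the number $N$ of Galton-Watson trees produced by the DLP decomposition is itself random, so one cannot simply substitute $2\varepsilon^2 n$ for it: this is precisely why the proof must couple the two forests and combine the concentration of $|\tilde{\mathcal{C}}_1^{(2)}|$ with the trivial observation that enlarging the forest can only make the sought family of vertex disjoint paths easier to find, so that a whp upper bound $N\le 2\varepsilon^2 n$ suffices. A secondary point is to make sure that contiguity is applied to the correct object --- a property of the graph after deletion of its $2$-core edges --- which is legitimate exactly because that deletion is a deterministic function of the graph.
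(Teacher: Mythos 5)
Your proof is correct and takes essentially the same approach the paper intends: transfer to the Ding--Lubetzky--Peres contiguous model, use the whp bound $|\tilde{\mathcal{C}}_1^{(2)}|\le 2\varepsilon^2n$ from Lemma~\ref{lemma:supercriticalregime}(c), and conclude by monotonicity in the number of attached trees. The paper only sketches this via the paragraph preceding the lemma (``follows from the considerations above''); you have correctly filled in the two points left implicit there --- that the forest-size $N$ is random and must be handled by a coupling/union-bound argument, and that contiguity must be applied to the property of $\mathcal{C}_1$ obtained by first (deterministically) deleting the $2$-core edges.
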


\section{Proof of Theorem \ref{thm:noalgorithm}}
\label{sec:proofthm1}

We start this section by repeating the statement of Theorem \ref{thm:noalgorithm} for the reader's convenience.
\begin{theorem}
\label{thm:noalgorithm} There exists an absolute constant $C>0$ such
that the following holds. For every constant $q\in (0,1)$ there
exist $n_0,\varepsilon_0>0$ such that for every fixed
$\varepsilon\in (0,\varepsilon_0)$ and any $n\ge n_0$ there is no
adaptive algorithm which reveals a path of length $\ell\ge
\frac{3C}{\varepsilon}\ln\left(\frac{1}{\varepsilon}\right)$ with
probability at least $q$ in $G\sim\mathcal G\left(n,p\right)$, where
$p=\frac{1+\varepsilon}{n}$, by querying at most
$\frac{q\ell}{8640Cp\varepsilon\ln\left(\frac{1}{\varepsilon}\right)}$
pairs of vertices.
\end{theorem}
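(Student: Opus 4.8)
The plan is to argue by contradiction. Suppose such an algorithm $\mathcal A$ exists; I will use it repeatedly to exhibit, with probability bounded away from $0$ (in fact at least $q/4$), a family of vertex-disjoint paths, each of length at least $\frac{C}{\varepsilon}\ln(1/\varepsilon)$, whose union covers at least $13\varepsilon^2 n$ vertices of $G\sim\mathcal G(n,p)$; for $n$ large this contradicts Theorem~\ref{thm:notmanyvertexdisjointpaths}. Replacing $\mathcal A$'s internal randomness by its best possible value keeps the query bound $Q:=\frac{q\ell}{8640\,Cp\varepsilon\ln(1/\varepsilon)}$ and turns ``success probability $\ge q$'' into ``succeeds on a set of graphs of $\mathcal G(n,p)$-measure $\ge q$'', so I may assume $\mathcal A$ is deterministic.

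Fix $G\sim\mathcal G(n,p)$ once and for all, and set $r:=\lceil C'\varepsilon^2 n/(q\ell)\rceil$ for a suitable absolute constant $C'$. I run $\mathcal A$ in $r$ consecutive rounds, maintaining a set $W_i$ of ``unused'' vertices with $W_1=[n]$: in round $i$ I run $\mathcal A$ on the vertex set $W_i$, answering every query truthfully according to $G$; if the round succeeds, $\mathcal A$ returns a path of length $\ell$ all of whose edges lie in $G[W_i]\subseteq G$, and I set $W_{i+1}:=W_i$ with the vertices of that path removed. Since $W_{i+1}\subseteq W_i$ and the path found in round $i$ lives inside $W_i$, the paths collected in distinct successful rounds are automatically vertex-disjoint, and each has length $\ell\ge\frac{3C}{\varepsilon}\ln(1/\varepsilon)\ge\frac{C}{\varepsilon}\ln(1/\varepsilon)$. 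Two facts make the process well-posed. First, at most $r(\ell+1)=O(\varepsilon^2 n/q)$ vertices are ever removed, so for $\varepsilon_0$ small enough in terms of $q$ we keep $|W_i|\ge(1-\varepsilon/2)n$ throughout, and $G[W_i]$ stays a supercritical binomial random graph on its own vertex set. Second, the total number of queries over all rounds is $rQ=O\!\left(\varepsilon n^2/(C\ln(1/\varepsilon))\right)$, which for $\varepsilon_0$ small is only a tiny fraction of $\binom n2$; the numerical constants $8640$ and $13$ in the two theorems are chosen precisely so that these estimates match.

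The crux is the claim that in \emph{every} round $\mathcal A$ succeeds with probability at least, say, $q/2$. Conditioned on the history of the first $i-1$ rounds, $G[W_i]$ is distributed like $\mathcal G(W_i,p)$ further conditioned on the outcomes of the at most $(i-1)Q\le rQ$ pairs queried earlier — i.e.\ conditioned on an atypically \emph{small} exposed set. One then argues that such conditioning changes $\mathcal A$'s success probability by at most a bounded factor: couple the run on $G[W_i]$ with a run on a genuinely fresh copy of $\mathcal G(W_i,p)$, observe that $\mathcal A$ behaves identically until it queries one of the few already-exposed pairs on which the two copies disagree, and conclude that therefore only a few edges of the path returned on the fresh copy can fail to belong to $G[W_i]$. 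Deleting those few edges and applying Lemma~\ref{lemma:vertexdisjointpaths} with a parameter $\alpha$ only slightly larger than $1/\ell$ — which is exactly where the slack factor $3$ between $\ell$ and $\frac{C}{\varepsilon}\ln(1/\varepsilon)$ is spent — still leaves vertex-disjoint subpaths, each of length at least $\frac{C}{\varepsilon}\ln(1/\varepsilon)$, covering a constant fraction of the original $\ell+1$ vertices. (A minor point: $\mathcal A$ is only promised to work on $\mathcal G(n,p)$; to run it on $W_i$ one pads $W_i$ back up to $[n]$ with isolated vertices, or argues that the ``bad algorithm'' hypothesis is insensitive to replacing $n$ by $(1\pm o_\varepsilon(1))n$.)

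Granting the claim, let $R$ be the number of successful rounds. Then $\mathbb E[R]\ge qr/2$, and since $0\le R\le r$ we get $\Pr[R\ge qr/4]\ge(\mathbb E[R]-qr/4)/r\ge q/4$. On the event $\{R\ge qr/4\}$ the constructed family contains at least $qr/4$ vertex-disjoint paths of length $\ge\frac{C}{\varepsilon}\ln(1/\varepsilon)$, covering in total $\Omega(qr\ell)$ vertices, which by the choice of $r$ is at least $13\varepsilon^2 n$. Hence $G\sim\mathcal G(n,p)$ admits such a family with probability at least $q/4$, contradicting Theorem~\ref{thm:notmanyvertexdisjointpaths} once $n\ge n_0$. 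I expect the genuine obstacle to be the middle step: showing that re-running $\mathcal A$ on the residual graph — conditioned on everything it has already seen — does not significantly degrade its performance (equivalently, controlling the total-variation cost of the $O(\varepsilon/\ln(1/\varepsilon))$-fraction of pre-exposed pairs), and meshing this estimate cleanly with the path-surgery of Lemma~\ref{lemma:vertexdisjointpaths}.
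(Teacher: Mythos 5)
Your high-level plan matches the paper's (iterate the algorithm, harvest vertex-disjoint paths, contradict Theorem~\ref{thm:notmanyvertexdisjointpaths}), but the step you yourself flag as ``the genuine obstacle'' really is a gap, and the coupling you sketch does not close it. You couple a run on $G[W_i]$, conditioned on the history, with a run on a fresh $\mathcal{G}(W_i,p)$, and note that the two runs agree until $\mathcal{A}$ queries a pre-exposed pair on which the copies disagree. True --- but after the first such disagreement the two trajectories can diverge completely, so the path returned on the fresh copy need not bear any relation to $G[W_i]$, and the conclusion ``only a few edges of the path returned on the fresh copy can fail to belong to $G[W_i]$'' does not follow. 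Moreover you have no control on how soon $\mathcal{A}$ hits a pre-exposed pair (it could be on its very first query), so you cannot lower-bound the probability that the coupling survives the round. As written, the central claim $\Pr[\text{round }i\text{ succeeds}]\ge q/2$ is unsupported, and this is precisely where the difficulty lives.

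The paper avoids this with three devices that your proposal omits. First, round $i$ is run on a genuinely fresh $G_{i-1}\sim\mathcal{G}(V_{i-1},p)$, not on $G[W_i]$ conditioned on earlier answers, so each round unconditionally succeeds with probability at least $q$ and there is no conditioning to fight. Second, a uniformly random permutation $\pi_i\in S_{V_{i-1}}$ is applied before round $i$, which turns the path returned into a uniformly random embedding of an $\ell$-path in $V_{i-1}$; by linearity of expectation and Markov's inequality, it then shares only few edges with the $O\bigl(\frac{\varepsilon n^2}{C\ln(1/\varepsilon)}\bigr)$ previously queried pairs, with probability at least $1/2$. (Without this randomization, $\mathcal{A}$ could deliberately route its path through pairs it had already queried, and nothing in your argument forbids that.) Third, a single graph $H$ is assembled retrospectively by taking, for each pair, the outcome of the first round in which it was ever queried; this makes $H$ a subgraph of one $\mathcal{G}\bigl(n',\frac{1+2\varepsilon}{n'}\bigr)$, and the non-pre-queried edges of each $P_i$ automatically lie in $H$. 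Only after these three moves does deleting the few pre-queried edges and applying Lemma~\ref{lemma:vertexdisjointpaths} produce a family of disjoint long paths sitting inside a \emph{single} supercritical random graph, which is what Theorem~\ref{thm:notmanyvertexdisjointpaths} requires.
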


\begin{proof}[Proof of Theorem \ref{thm:noalgorithm}]
Suppose $\text{Alg}$ is an adaptive algorithm which with probability
at least $q$ finds a path of length $\ell$ in $G\sim\mathcal
G\left(n,p\right)$, where $p=\frac{1+\varepsilon}{n}$, after querying at most
$\frac{q\ell}{8640Cp\varepsilon\ln\left(\frac{1}{\varepsilon}\right)}$
pairs of vertices. We consider implicitly that $\text{Alg}$ takes an \textit{ordered} vertex set as part of its input.  We shall assume henceforth that $n,C>0$ are
sufficiently large and $\varepsilon>0$ is sufficiently small in
order to obtain a contradiction. Note that, by restricting
$\text{Alg}$ to a set of $n$ vertices, we get an algorithm which for
any $n'\ge n$ with probability at least $q$ finds in $G'\sim
\mathcal G(n',p)$ a path of length $\ell$ after querying at most
$\frac{q\ell}{8640Cp\varepsilon\ln\left(\frac{1}{\varepsilon}\right)}$
pairs of vertices. We shall abuse notation slightly and call Alg to all these algorithms.

Define
$n':=\left(1+\frac{720\varepsilon^2}{q}\right)n$, $V_0:=[n']$,
$I_{0}:=\emptyset$ and $s:=\frac{720\varepsilon^2 n}{q(\ell+1)}$.
For $i=1,\ldots,s$ do the following:
\begin{itemize}
\item Apply $\text{Alg}$ to $G_{i-1}\sim\mathcal G\left(V_{i-1},p\right)$, where the vertices in $V_{i-1}$ are permuted according to a permutation $\pi_{i}\in S_{V_{i-1}}$ chosen uniformly at random. Let $L_{i}$ be the graph of all pairs of vertices queried and let $K_{i}\subseteq L_{i}$ be the graph of edges present. By the algorithm we know that $L_{i}$ has at most $\frac{q\ell}{8640Cp\varepsilon\ln\left(\frac{1}{\varepsilon}\right)}$ edges. If $K_i$ contains a path of length $\ell$ then let $P_i$ be one such path, define $V_{i}:=V_{i-1}\setminus V(P_i)$ and set $I_{i}:=I_{i-1}\cup \{i\}$. Otherwise, set $V_{i}:=V_{i-1}$ and $I_{i}:=I_{i-1}$.
\end{itemize}

Observe that $|V_{s}|\ge n'-(\ell+1) s=
\left(1+\frac{720\varepsilon^2}{q}\right)n-\frac{720\varepsilon^2}{q}
n= n$ and so we can indeed apply $\text{Alg}$ to $V_{i-1}$ for any
$i\in [s]$. We define a random graph $H$ with vertex set $V_0$ in
the following way. For every pair of vertices $\{u,v\}\subseteq V_0$
if $\{u,v\}\in E(L_{i})$ for some $i\in [s]$ then  let $i_0$ be the
smallest such index and set $\{u,v\}$ as an edge of $H$ if and only
if $\{u,v\}\in E(K_{i_{0}})$. Consider all the other pairs
$\{u,v\}\subseteq V_0$ as non-edges of $H$. From the procedure above
it follows that for every $\{u,v\}\subseteq V_0$ we have
independently that
\[\Pr\left[\{u,v\}\in E(H)\right]\le p=\frac{1+\varepsilon}{n}=\frac{1+\varepsilon}{n'}\cdot \frac{n'}{n}=\frac{(1+\varepsilon)\left(1+\frac{720\varepsilon^2}{q}\right)}{n'}\le
\frac{1+2\varepsilon}{n'}\,,\] provided $\varepsilon\le
\frac{q}{1440}$. Thus, the graph $H$ can be viewed as a subgraph of
a graph sampled from $\mathcal
G\left(n',\frac{1+2\varepsilon}{n'}\right)$. In particular, if with
probability at least $\frac{q^2}{4}$ the graph $H$ contains a set of
vertex disjoint paths of length at least
$\frac{C}{\varepsilon}\ln\left(\frac{1}{\varepsilon}\right)$ which
cover at least $52\varepsilon^2n'$ vertices then the same must also
hold with probability at least $\frac{q^2}{4}$ in $\mathcal
G\left(n',\frac{1+2\varepsilon}{n'}\right)$. However, this would
contradict Theorem \ref{thm:notmanyvertexdisjointpaths} and so it
suffices to prove the following claim:
\begin{claim}
With probability at least $\frac{q^2}{4}$ the graph $H$ contains a
set of vertex disjoint paths of length at least
$\frac{C}{\varepsilon}\ln\left(\frac{1}{\varepsilon}\right)$ which
cover at least $52\varepsilon^2n'$ vertices of $V_0$.
\end{claim}
Define for each $i\in I_{s}$ the graph $H_{i}$ with vertex set $V_{i-1}$ and edge set $\left(\bigcup_{j=1}^{i-1}E(L_j)\right)\cap \binom{V_{i-1}}{2}$ and note that
\begin{equation}
\label{eq:edges}
|E(H_i)|\le s\cdot \frac{q\ell}{8640Cp\varepsilon\ln\left(\frac{1}{\varepsilon}\right)}\le\frac{\varepsilon n^2}{12C\ln \left(\frac{1}{\varepsilon}\right)(1+\varepsilon)}\le  \frac{\varepsilon}{6C\ln \left(\frac{1}{\varepsilon}\right)}\cdot \binom{|V_{i-1}|}{2}.
\end{equation}
Observe that for each $i\in I_{s}$ the set $V_{i-1}\setminus V_{i}$
consists of the vertex set of a path $P_i$ in the graph $K_i$. For
each such $i$ set $B_{i}:=E(P_{i})\cap E(H_{i})$ and let $Q_{i}$
denote the graph obtained from $P_{i}$ by deleting all the edges in
$B_{i}$. Note crucially that  $E(Q_{i})\subseteq E(H)$ and that the
graphs $\{Q_{i}\}_{i\in I_{s}}$ are vertex disjoint.

Consider now the set $I:=\left\{i\in I_{s}:|B_i|\le
\frac{\varepsilon}{3C\ln\left(\frac{1}{\varepsilon}\right)}
\ell\right\}$. By Lemma \ref{lemma:vertexdisjointpaths} it follows
that for any $i\in I$ there exist vertex disjoint subpaths
$\{Q^j_{i}\}_{j\in J_{i}}$ of $Q_{i}$ each of length at least
$\frac{C}{\varepsilon}\ln\left(\frac{1}{\varepsilon}\right)$ which
cover at least
$\left(\frac{1}{3}-\frac{\varepsilon}{3C\ln\left(\frac{1}{\varepsilon}\right)}\right)\ell\ge\frac{1}{4}(\ell+1)$
vertices of $V(Q_{i})$. Thus, if $|I|\ge \frac{1}{3}sq$ then
$\{Q^{j}_{i}\}_{i\in I,j\in J_{i}}$ forms a collection of vertex
disjoint paths in $H$ of length at least
$\frac{C}{\varepsilon}\ln\left(\frac{1}{\varepsilon}\right)$ which
cover at least $\frac{1}{4}(\ell+1) \cdot
\frac{1}{3}sq=60\varepsilon^2n\ge 52\varepsilon^2n'$ vertices of
$V_{0}$. It suffices to show then that with probability at least
$\frac{q^2}{4}$ we have $|I|\ge \frac{1}{3}sq$.

Let $I':=[s]\setminus I$ and note that for every $i\in [s]$ we have
\begin{equation}
\label{eq:probability}
\Pr\left[i\in I'\right]=\Pr\left[i\notin I_s\right]+\Pr\left[i\in I'\mid i\in I_s\right]\cdot \Pr\left[i\in I_s\right].
\end{equation}
It is clear from the procedure above that for each $i\in [s]$ we have $\Pr\left[i\in I_{s}\right]\ge q$. Note also crucially that, provided $i\in I_s$, the path $P_i$ is a randomly mapped path of length $\ell$ on the vertex set $V_{i-1}$. Indeed, this happens because before the $i$-th application of Alg we permuted the vertices of $V_{i-1}$ according to a permutation $\pi_i\in S_{V_{i-1}}$ chosen uniformly at random. Thus, by conditioning on the event that $i\in I_{s}$, on any possible graph $H_i$ satisfying (\ref{eq:edges}) and on the path $\pi_{i}^{-1}(P_i)$, we have for any $e\in E\left(\pi_{i}^{-1}(P_i)\right)$:
\[\Pr\left[\pi_{i}(e)\in E(H_{i})\right]\le \frac{\varepsilon}{6C\ln \left(\frac{1}{\varepsilon}\right)}\,,\]
and so, by linearity of expectation it follows that:
\[\EE\left[|E(P_{i})\cap E(H_{i})|\right]\le \frac{\varepsilon}{6C\ln \left(\frac{1}{\varepsilon}\right)}\ell.\]
Thus, by Markov's inequality (see, e.g., \cite{AlonSpencer}) we get that
\[\Pr\left[i\in I'\mid i\in I_{s}\right]\le \frac{1}{2}\,,\]
and so by equation (\ref{eq:probability}) we see that for any $i\in [s]$ we have $\Pr\left[i\in I'\right]\le 1-\frac{1}{2}\Pr[i\in I_s]\le 1-\frac{q}{2}$. It follows then by linearity of expectation that $\EE\left[|I'|\right]\le s\left(1-\frac{q}{2}\right)$. Hence, again by Markov's inequality we conclude that
\[\Pr\left[|I'|\ge  \frac{s}{1+\frac{q}{2}}\right]\le 1-\frac{q^2}{4}\;\;\text{, which implies } \;\; \frac{q^2}{4}\le \Pr\left[|I|\ge \frac{sq}{2+q}\right]\le \Pr\left[|I|\ge \frac{sq}{3}\right].\]
This completes the proof.
\end{proof}

\section{Proof of Theorem \ref{thm:notmanyvertexdisjointpaths}}
\label{sec:proofthm2}
\begin{theorem}
\label{thm:notmanyvertexdisjointpaths} There exist constants
$C,\varepsilon_0>0$ such that for every fixed $\varepsilon\in(0,
\varepsilon_0)$ we have whp that $G\sim \mathcal
G\left(n,\frac{1+\varepsilon}{n}\right)$ does not contain a set of
vertex disjoint paths of lengths at least $\frac{C}{\varepsilon}\ln
\left(\frac{1}{\varepsilon}\right)$ whose union covers at least
$13\varepsilon^2n$ vertices.
\end{theorem}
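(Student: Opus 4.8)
The plan is to reduce the statement to a property of the giant component $\mathcal{C}_1$ and then, via Lemma~\ref{lemma:DLP}, to a property of a disjoint union of $2\varepsilon^2 n$ independent $\text{Poisson}(\mu)$-Galton--Watson trees. First I would argue that any long path in $G$ lives essentially inside $\mathcal{C}_1$: by part~$(b)$ of Lemma~\ref{lemma:supercriticalregime}, all components other than $\mathcal{C}_1$ have $O(\varepsilon^{-2}\log n)$ vertices whp, so the total number of vertices outside $\mathcal{C}_1$ that can be covered by paths of length at least $\frac{C}{\varepsilon}\ln(1/\varepsilon)$ is negligible compared to $13\varepsilon^2 n$ (in fact, a path must lie within a single component, so the union of such paths outside $\mathcal{C}_1$ covers at most $O(n\cdot\varepsilon^{-2}\log n / (\varepsilon^{-1}\log(1/\varepsilon)))$ vertices, which is $o(\varepsilon^2 n)$). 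Hence it suffices to bound the number of vertices covered by vertex-disjoint long paths inside $\mathcal{C}_1$, and in turn (splitting each path at the $\le 2\varepsilon^2 n$ vertices where it meets the $2$-core, using part~$(c)$ of Lemma~\ref{lemma:supercriticalregime}) to bound the vertices covered in $\mathcal{C}_1\setminus \mathcal{C}_1^{(2)}$ by a family of vertex-disjoint paths of length at least, say, $\frac{C}{3\varepsilon}\ln(1/\varepsilon)$. This is where the constant $13$ versus $52$ type slack comes from: a path of length $\ell$ hitting the $2$-core in $k$ points breaks into $\le k+1$ subpaths, and one keeps the ones still of length $\ge \ell/3$ as in Lemma~\ref{lemma:vertexdisjointpaths}, losing only a constant fraction of covered vertices.

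Next I would invoke Lemma~\ref{lemma:DLP} to move to the forest $\mathcal{F} = \mathcal{T}_1 \sqcup \cdots \sqcup \mathcal{T}_{2\varepsilon^2 n}$ of independent $\text{Poisson}(\mu)$-Galton--Watson trees, and define $X := \sum_{i=1}^{2\varepsilon^2 n} T_{\ell}^{(i)}$, where $T_\ell^{(i)}$ is the random variable of Lemma~\ref{lemma:treeswithlongpaths} associated with $\mathcal{T}_i$ (so $T_\ell^{(i)} = |\mathcal{T}_i|$ if $\mathcal{T}_i$ contains a path of length at least $\ell/3$, and $0$ otherwise). The point is that any family of vertex-disjoint paths in $\mathcal{F}$, each of length at least $\ell/3$, can only use vertices of trees $\mathcal{T}_i$ that themselves contain a path of that length (a path is connected, hence lies in one tree), so the total number of covered vertices is at most $X$. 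By Lemma~\ref{lemma:treeswithlongpaths}, for $\ell = \frac{C}{\varepsilon}\ln(1/\varepsilon)$ we have $\EE[T_\ell^{(i)}] \le 14\varepsilon^3$ and $\text{Var}[T_\ell^{(i)}] \le 8\varepsilon^{-3}$, uniformly in $i$. By independence, $\EE[X] \le 2\varepsilon^2 n \cdot 14\varepsilon^3 = 28\varepsilon^5 n$ and $\text{Var}[X] \le 2\varepsilon^2 n \cdot 8\varepsilon^{-3} = 16 n/\varepsilon$. Chebyshev's inequality then gives $\Pr[X \ge 12\varepsilon^2 n] \le \Pr[|X - \EE X| \ge \varepsilon^2 n] \le 16n/\varepsilon \cdot \varepsilon^{-4} n^{-2} = 16/(\varepsilon^5 n) \to 0$, so whp $X < 12\varepsilon^2 n$, hence whp no family of vertex-disjoint paths of length $\ge \ell/3$ in $\mathcal{F}$ covers $12\varepsilon^2 n$ or more vertices.

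Finally I would chain the implications backwards: whp this bound holds for $\mathcal{F}$ (Chebyshev), hence whp for $\mathcal{C}_1\setminus\mathcal{C}_1^{(2)}$ (Lemma~\ref{lemma:DLP}), hence — adding back at most $2\varepsilon^2 n$ vertices of the $2$-core and accounting for the constant-factor loss from the path-splitting argument — no family of vertex-disjoint paths of length $\ge \frac{C}{\varepsilon}\ln(1/\varepsilon)$ in $\mathcal{C}_1$ covers $13\varepsilon^2 n$ vertices whp, and together with the earlier reduction the same holds for all of $G$. The main obstacle, and the step requiring the most care, is the bookkeeping in the reduction from $G$ to $\mathcal{C}_1\setminus\mathcal{C}_1^{(2)}$: one has to track how the length threshold degrades (from $\ell$ to $\ell/3$, which is exactly why Lemma~\ref{lemma:treeswithlongpaths} is stated with $\ell/3$) and how the covered-vertex count degrades through two splittings (removing the $2$-core edges, and discarding short leftover subpaths via Lemma~\ref{lemma:vertexdisjointpaths}), making sure the constants line up so that a covered-vertex count of $13\varepsilon^2 n$ in $G$ forces a count of at least $12\varepsilon^2 n$ in $\mathcal{F}$. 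All the probabilistic content is then just the one application of Chebyshev above; the rest is deterministic graph surgery plus the structural inputs from Lemmas~\ref{lemma:supercriticalregime} and~\ref{lemma:DLP}.
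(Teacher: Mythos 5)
There is a genuine gap in your treatment of the components other than $\mathcal{C}_1$. You assert that the contribution of small components is negligible, offering the bound $O\left(n\cdot\varepsilon^{-2}\log n / (\varepsilon^{-1}\log(1/\varepsilon))\right) = O\left(n\varepsilon^{-1}\log n/\log(1/\varepsilon)\right)$, and claim this is $o(\varepsilon^2 n)$. That quantity actually grows like $n\log n$, which exceeds even the trivial bound $n$, so it is not a bound at all and certainly not $o(\varepsilon^2 n)$. More fundamentally, there is no purely structural reason why small components cannot host many long paths: roughly $(1-\Theta(\varepsilon))n$ vertices lie outside the giant, partitioned into components of size up to $\Theta(\varepsilon^{-2}\log n)$, and a priori a constant fraction of those components could each contain a path of length $\varepsilon^{-1}\log(1/\varepsilon)$. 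Ruling this out is exactly the content of the paper's Lemma~\ref{lemma:notmanypathsinsmallcomponents}, which introduces the random variable $X_\ell$ (number of vertices in components of size at most $\frac{20}{\varepsilon^2}\ln n$ containing a path of length at least $\ell$), bounds $\EE[X_\ell]\le\frac{\varepsilon^3 n}{2}$ via a first-moment computation over labeled spanning trees using the Galton--Watson height estimate $p_{t,\ell}$ from Lemma~\ref{lemma:estimate}, and then concentrates $X_\ell$ around its mean via the edge-exposure martingale (Lemma~\ref{lemma:martingale}). None of this appears in your plan; you would need to supply it.

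Two smaller points. First, your reduction from paths in $\mathcal{C}_1$ to paths in $\mathcal{C}_1\setminus\mathcal{C}_1^{(2)}$ invokes Lemma~\ref{lemma:vertexdisjointpaths}, which controls the situation when a \emph{bounded number of edges} are deleted from a single path; but you have no per-path control on how many $2$-core vertices a given path visits, so that lemma does not directly apply. The paper's cleaner route is to first truncate all paths to size at most $2\ell$ and then observe that each such path has a subpath with at least a third of its vertices lying entirely inside $\mathcal{C}_1^{(2)}$ or entirely inside the forest $\mathcal{C}_1\setminus\mathcal{C}_1^{(2)}$, giving $Y_\ell\le 6|\mathcal{C}_1^{(2)}|+6Z_\ell$ directly. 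Second, the Galton--Watson/Chebyshev part of your argument (the random variables $T_\ell^{(i)}$, the bounds $\EE\le 28\varepsilon^5 n$ and $\text{Var}\le 16n/\varepsilon$, and the conclusion $T_\ell<12\varepsilon^2 n$ whp) is correct and matches the paper's Lemma~\ref{lemma:outside2core}, though the paper establishes the sharper $Z_\ell<29\varepsilon^5 n$; either threshold works once the other two contributions are handled properly.
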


\begin{proof}[Proof of Theorem \ref{thm:notmanyvertexdisjointpaths}]
Let $G\sim \mathcal G(n,p)$ where $p=\frac{1+\varepsilon}{n}$. Let $\mathcal C_1$ denote the largest connected component of $G$, let $\mathcal C^{(2)}_1$ denote the $2$-core of $\mathcal C_1$ and let $\mathcal C_1\setminus \mathcal C^{(2)}_1$ denote the graph obtained from $\mathcal C_1$ by deleting the edges in $\mathcal C^{(2)}_1$. For $\ell\ge 1$ consider the following random variables:
\begin{itemize}
\item $X_{\ell}$ = number of vertices which belong to connected components of $G$ of size at most $\frac{20}{\varepsilon^2}\ln n$ containing a path of length at least $\ell$.

\item $Y_{\ell}$ = maximum number of vertices covered by vertex disjoint paths of length at least $\ell$ in $\mathcal C_1$.

\item $Z_{\ell}$ = maximum number of vertices covered by vertex disjoint paths of length at least $\frac{\ell}{3}$ in $\mathcal C_1\setminus \mathcal C^{(2)}_1$.
\end{itemize}

By $(b)$ of Lemma \ref{lemma:supercriticalregime} it follows that whp $X_{\ell}+Y_{\ell}$ is an upper bound on the maximum number of vertices of $G$ covered by  vertex disjoint paths of length at least $\ell$. Note that we may assume that all the paths considered have size at most $2\ell$ by splitting larger paths into several paths of length at least $\ell$. Moreover, if $P$ is a path of length at least $\ell$ in $\mathcal C_1$ then, since $\mathcal C_1\setminus \mathcal C^{(2)}_1$ consists of a disjoint union of trees, there must exist a subpath $P'$ of the path $P$ with at least $\frac{|P|}{3}\ge \frac{\ell}{3}$ vertices which lies in $\mathcal C^{(2)}_1$ or in $\mathcal C_1\setminus \mathcal C^{(2)}_1$. Since $|P|\le 6|P'|$ it follows that $Y_{\ell}\le 6|\mathcal C^{(2)}_1|+6Z_{\ell}$.

By (c) of Lemma \ref{lemma:supercriticalregime} we know that whp $|\mathcal C^{(2)}_1|\le 2\varepsilon^2n$, provided $\varepsilon_0$ is chosen small enough. It suffices then to show that there exist constants $C,\varepsilon_0>0$ such that for every fixed $\varepsilon\in (0, \varepsilon_0)$ and for $\ell:=\frac{C}{\varepsilon}\ln \left(\frac{1}{\varepsilon}\right)$ we have whp that
\[X_{\ell}<\varepsilon^{3}n\;\;\;\text{ and }\;\;\; Z_{\ell}< 29\varepsilon^5n.\]
since in that case we have whp that the maximum number of vertices of $G$ covered by vertex disjoint paths of length at least $\ell$ is at most
\[X_{\ell}+Y_{\ell}\le X_{\ell}+6|\mathcal C^{(2)}_1|+6Z_{\ell} < \varepsilon^3n+6\cdot 2\varepsilon^2n+6\cdot 29\varepsilon^5n\le 13\varepsilon^2n.\]
provided $\varepsilon_0$ is chosen sufficiently small. Lemmas \ref{lemma:notmanypathsinsmallcomponents} and \ref{lemma:outside2core} complete the proof.\end{proof}

\begin{lemma}
\label{lemma:notmanypathsinsmallcomponents}
There exist constants $C,\varepsilon_0>0$ such that for every fixed $\varepsilon\in (0, \varepsilon_0)$ and for $\ell:=\frac{C}{\varepsilon}\ln \left(\frac{1}{\varepsilon}\right)$ we have $X_{\ell}<\varepsilon^{3}n$ whp.
\end{lemma}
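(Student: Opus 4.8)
The plan is to bound $X_\ell$ by a first-moment (expectation plus concentration) argument over the small components of $G$. Recall $X_\ell$ counts vertices lying in connected components of $G$ of size at most $t_1:=\frac{20}{\varepsilon^2}\ln n$ that contain a path of length at least $\ell$. First I would observe that a component of $G\sim\mathcal G(n,p)$ of size exactly $t$ which contains a spanning tree-plus-possibly-extra-edges behaves, as far as the presence of a long path is concerned, essentially like a tree on $t$ vertices: more precisely, I will compare the small-component structure with a collection of Galton-Watson trees with offspring distribution $\text{Poisson}(\lambda)$, where either $\lambda=1+\varepsilon$ directly or, via the conjugation/tilting remarks following Lemma~\ref{lemma:ADJ}, with the subcritical parameter $\mu<1$ satisfying $\mu e^{-\mu}=(1+\varepsilon)e^{-(1+\varepsilon)}$. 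The key point is that the expected number of vertices in ``tree-like'' components of size $t$ containing a path of length at least $\ell$ is controlled by $n\cdot\Pr[|\mathcal T|=t]\cdot t\cdot p_{t,\ell}$ (exploration from a fixed vertex), which is exactly the quantity estimated inside the proof of Lemma~\ref{lemma:treeswithlongpaths}.

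Concretely, the steps would be: (1) Split the sum over component sizes $t$ with $\ell\le t\le t_1$ into the range $\ell\le t\le t_0$ with $t_0=\frac{15}{\varepsilon^2}\ln(\tfrac1\varepsilon)$ and the range $t_0<t\le t_1$. For the first range, use Lemma~\ref{lemma:estimate} to get $\sum_{\ell\le t\le t_0}p_{t,\ell}\le\varepsilon^3$, and combine with $\Pr[|\mathcal T|=t]\le$ (Borel tail) to bound the contribution to $\EE[X_\ell]$ by something like $O(\varepsilon^3 n)$ — actually strictly smaller, since we also collect a factor $e^{-\Theta(\varepsilon^2)t}$ as in display~(\ref{eq:twosums}). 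For the second range $t>t_0$, even ignoring $p_{t,\ell}\le 1$, the Borel-distribution tail $\Pr[|\mathcal T|\ge t_0]$ is exponentially small in $\varepsilon^2 t_0=\Theta(\ln(\tfrac1\varepsilon))$, giving a polynomial-in-$\varepsilon$ saving; precisely, as in display~(\ref{eq:upperbound}), $\sum_{t\ge t_0}e^{-\varepsilon^2 t/3}\le 6\varepsilon^3$, so this range contributes $O(\varepsilon^3 n)$ as well. Hence $\EE[X_\ell]\le c\varepsilon^3 n$ for a small absolute constant $c$, in particular $\le\frac12\varepsilon^3 n$ once $C$ is large enough. (2) Upgrade from expectation to a whp statement. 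Here I would use the edge-exposure martingale inequality, Lemma~\ref{lemma:martingale}: changing one edge of $G$ changes $X_\ell$ by at most $O(t_1)=O(\varepsilon^{-2}\ln n)$ vertices (it can merge/split at most the two components at the endpoints, each of size $\le t_1$ — or else one of them is large, in which case it no longer counts toward $X_\ell$ and the change is still $O(t_1)$). Then $\Pr[|X_\ell-\EE X_\ell|>\frac12\varepsilon^3 n]$ is at most $2\exp(-\alpha^2/4)$ with $\alpha\sqrt{n^2p}\asymp \varepsilon^3 n/t_1$, i.e. $\alpha\asymp \varepsilon^5 n^{1/2}/(\ln n\cdot\text{const})$, which is $\to\infty$ with $n$, so this probability is $o(1)$. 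Combining, $X_\ell<\varepsilon^3 n$ whp.

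One technical subtlety to handle carefully: the bound $|X(G)-X(H)|\le C$ in Lemma~\ref{lemma:martingale} must genuinely hold for \emph{all} pairs of one-edge-different graphs, not just typical ones, so I should take the Lipschitz constant to be exactly $2t_1$ (the worst case where two small components of size $t_1$ merge, or one splits). Also, the comparison between the actual distribution of small components of $\mathcal G(n,p)$ and the Poisson Galton-Watson trees is cleanest when done directly: the expected number of vertices of $G$ in a component isomorphic to a fixed tree $T$ on $t$ vertices equals $n\cdot\frac{(n-1)(n-2)\cdots(n-t+1)}{\text{(symmetry)}}\,p^{t-1}(1-p)^{\binom t2-(t-1)+t(n-t)}/t$-type expressions, which after the standard manipulations ($\frac{t^t}{t!}\le e^t$, $(1+\varepsilon)^t\le e^{\varepsilon t-\varepsilon^2 t/3}$, $(1-p)^{tn}\approx e^{-(1+\varepsilon)t}$) reduce to exactly the Borel-weight times $p_{t,\ell}$ bound used above; I would either cite the standard small-subgraph-count estimates (as in \cite{JLR}) or reproduce this one-line computation. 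The main obstacle I anticipate is not any single step but getting the constants to line up so that the final bound is $\varepsilon^3 n$ rather than merely $O(\varepsilon^3 n)$ — this is why the statement builds in the freedom to take $C$ large and $\varepsilon_0$ small, and the $e^{-\varepsilon^2 t/3}$ factors coming from $(1+\varepsilon)^t e^{-(1+\varepsilon)t}$ are what provide the needed slack.
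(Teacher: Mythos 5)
Your proposal follows essentially the same route as the paper: a first-moment bound on $\EE[X_\ell]$ obtained by summing $t\cdot p_{t,\ell}$ against the Borel-type weight (equivalently, the paper's direct union bound over labeled spanning trees $p_{t,\ell}t^{t-2}\cdot\binom nt p^{t-1}(1-p)^{t(n-t)}$), split at $t_0=\frac{15}{\varepsilon^2}\ln(1/\varepsilon)$ so that Lemma~\ref{lemma:estimate} handles $t\le t_0$ and the geometric tail handles $t>t_0$, yielding $\EE[X_\ell]\le\frac12\varepsilon^3n$; the whp upgrade is then the same edge-exposure martingale (Lemma~\ref{lemma:martingale}) with worst-case Lipschitz constant $2\cdot\frac{20}{\varepsilon^2}\ln n$. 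The details line up with the paper's proof, including the cap on component size from Lemma~\ref{lemma:supercriticalregime}(b).
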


\begin{proof}[Proof of Lemma \ref{lemma:notmanypathsinsmallcomponents}]
Given a set $S\subseteq [n]$ of size $t$, let $\mathcal S_{\ell}(S)$ (resp. $\mathcal T_{\ell}(S)$) denote the set of possible connected graphs (resp. spanning trees) on the vertex set $S$ which contain a path of length at least $\ell$. Let $X_{S}$ denote the indicator random variable of the event that $G[S]\in \mathcal S_{\ell}(S)$ and that there are no edges in $G$ between $S$ and $[n]\setminus S$. Note that $G[S]\in \mathcal S_{\ell}(S)$ if and only if there exists $T\in \mathcal T_{\ell}(S)$ such that $T\subseteq G[S]$. Thus, by the union bound we have
\begin{equation}
\label{eq:expectationX_C}
\EE\left[X_S\right]\le |\mathcal T_{\ell}(S)|\cdot p^{t-1}\cdot \left(1-p\right)^{t(n-t)}
\end{equation}
where the first term accounts for taking a union bound over all $T\in \mathcal T_{\ell}(S)$, the second term accounts for the probability that the edges in $T$ are present in $G[S]$ and the last term accounts for the probability that none of the edges between $S$ and $[n]\setminus S$ are present in $G$. Note that $|\mathcal T_{\ell}(S)|$ does not depend on the set $S$ and is equal to the number of labeled trees on $t$ vertices which contain a path of length at least $\ell$. More specifically, if $p_{t,\ell}$ denotes the proportion of labeled trees on $t$ vertices which contain a path of length at least $\ell$, then $|\mathcal T_{\ell}(S)|=p_{t,\ell}\cdot t^{t-2}$. Observe now that the random variable $X_{\ell}$ satisfies the following:
\[X_{\ell}\le \sum_{t=\ell}^{\frac{20}{\varepsilon^2}\ln n}\sum_{S\in \binom{[n]}{t}}t \cdot X_{S}.\]
We claim that for $\ell:=\frac{C}{\varepsilon}\ln \left(\frac{1}{\varepsilon}\right)$, where $C>0$ is a large constant, and for some constant $\varepsilon_0>0$, if $\varepsilon\in (0,\varepsilon_0)$ is fixed then $\Pr\left[X_{\ell}\ge \varepsilon^3n\right]= o(1)$. To prove this claim we start by estimating $\EE[X_{\ell}]$. Setting $t_0:=\frac{15}{\varepsilon^2}\ln\left(\frac{1}{\varepsilon}\right)$, we have by the linearity of expectation and by (\ref{eq:expectationX_C}) that if $\varepsilon_0$ is sufficiently small then:
\begin{align}
\EE[X_{\ell}]&\le \sum_{t=\ell}^{\frac{20}{\varepsilon^2}\ln n}t\cdot \binom{n}{t}\cdot p_{t,\ell}\cdot t^{t-2}\cdot p^{t -1}\cdot \left(1-p\right)^{t(n-t)}\notag\\
&\le \sum_{t=\ell}^{\frac{20}{\varepsilon^2}\ln n} t\cdot \left(\frac{en}{t}\right)^{t}\cdot p_{t,\ell}\cdot t^{t-2}\cdot \left(\frac{1+\varepsilon}{n}\right)^{t-1}\left(1-\frac{1+\varepsilon}{n}\right)^{t(n-t)}\notag\\
&\le \sum_{t=\ell}^{\frac{20}{\varepsilon^2}\ln n} e^t\cdot t^{-1} \cdot n \cdot p_{t,\ell}\cdot \frac{e^{\varepsilon t-\frac{\varepsilon^2}{3}t}}{1+\varepsilon}\cdot e^{-(1+\varepsilon)t+\frac{(1+\varepsilon)t^2}{n}}\notag\\
&\le \frac{(1+o(1))n}{\ell(1+\varepsilon)} \cdot \sum_{t\ge\ell}p_{t,\ell}\cdot e^{-\frac{\varepsilon^2}{3}t}\notag\\
&\le \frac{n}{14}\cdot \left( \sum_{\ell \le t\le t_0}p_{t,\ell}+\sum_{t\ge t_0}e^{-\frac{\varepsilon^2}{3}t}\right)\label{eq:twosums1}
\end{align}
where in the third inequality we used the fact that $(1+\varepsilon)^{t}\le e^{\varepsilon t-\frac{\varepsilon^2}{3}t}$ for sufficiently small $\varepsilon>0$. By Lemma \ref{lemma:estimate} there exist constants $C,\varepsilon_0>0$ such that the first sum in (\ref{eq:twosums1}) is at most $\varepsilon^3$. Moreover, by (\ref{eq:upperbound}) the second sum in (\ref{eq:twosums1}) is at most $6\varepsilon^3$. Thus, all in all, we conclude that there exist constants $C,\varepsilon_0>0$ such that
\[\EE\left[X_{\ell}\right]\le \frac{n}{14} \cdot (\varepsilon^3+6\varepsilon^3)= \frac{\varepsilon^3n}{2}.\]

Note that if $G$ and $H$ differ in precisely one edge then $|X_{\ell}(G)-X_{\ell}(H)|\le \frac{40}{\varepsilon^2}\ln n$ because one edge affects at most two connected components of size at most $\frac{20}{\varepsilon^2}\ln n$. Thus, by Lemma \ref{lemma:martingale} it follows that
\[\Pr\left[X_{\ell}>\varepsilon^3n\right]\le \Pr\left[|X_{\ell}-\EE[X_{\ell}]|>\frac{\varepsilon^3n}{2}\right]\le e^{-\Omega\left(\frac{n}{(\ln n)^2}\right)}=o(1).\]
\end{proof}

\noindent{\bf Remark.} An alernative approach to the proof of Lemma \ref{lemma:notmanypathsinsmallcomponents} would be to invoke the so called symmetry rule (see, e.g., Chapter 5.6 of \cite{JLR}), postulating that in the supercritical regime $p=\frac{1+\varepsilon}{n}$, the subgraph of $G\sim \mathcal G(n,p)$ outside the giant component behaves typically as a random graph with subcritical edge probability. One can then estimate the likely contribution of paths of length at least $\ell=\frac{C}{\varepsilon}\ln\left(\frac{1}{\varepsilon}\right)$ coming from the small components to the total volume of vertex disjoint paths of length at least $\ell$ and to show it to be $O(\varepsilon^2n)$ whp, using a direct first moment argument. Since we still need to treat the paths residing in the giant component outside the 2-core (the random variable $Z_{\ell}$), we chose to adopt a unified approach using the machinery of Galton-Watson trees developed in Section \ref{s-GW}, and to apply it here as well.

\begin{lemma}
\label{lemma:outside2core}
There exist constants $C,\varepsilon_0>0$ such that for every fixed $\varepsilon\in (0, \varepsilon_0)$ and for $\ell:=\frac{C}{\varepsilon}\ln \left(\frac{1}{\varepsilon}\right)$ we have $Z_{\ell}<29\varepsilon^{5}n$ whp.
\end{lemma}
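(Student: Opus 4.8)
The plan is to bound $Z_{\ell}$, the maximum number of vertices covered by vertex disjoint paths of length at least $\frac{\ell}{3}$ in $\mathcal C_1\setminus \mathcal C_1^{(2)}$, by invoking the Ding--Lubetzky--Peres contiguity result in the form of Lemma~\ref{lemma:DLP}. That lemma reduces the problem to the disjoint union of $N:=2\varepsilon^2 n$ independent $\text{Poisson}(\mu)$-Galton--Watson trees $\mathcal T_1,\ldots,\mathcal T_N$, where $\mu e^{-\mu}=(1+\varepsilon)e^{-(1+\varepsilon)}$. So it suffices to show that whp the union of these $N$ trees does not contain a family of vertex disjoint paths of length at least $\frac{\ell}{3}$ covering $29\varepsilon^5 n$ vertices. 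Since the trees are disjoint, any such family decomposes tree-by-tree: from each $\mathcal T_i$ we extract some vertex disjoint subpaths of length $\ge \frac{\ell}{3}$, and the total number of covered vertices is at most $\sum_{i=1}^N T_{\ell}(\mathcal T_i)$, where $T_{\ell}$ is exactly the random variable defined in Lemma~\ref{lemma:treeswithlongpaths} --- it equals $|\mathcal T_i|$ if $\mathcal T_i$ contains a path of length at least $\frac{\ell}{3}$, and $0$ otherwise. (Indeed, if $\mathcal T_i$ contains no path of length $\ge \frac{\ell}{3}$ it contributes nothing, and otherwise it can contribute at most all of its vertices.)

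The key step is then a concentration argument for $W:=\sum_{i=1}^N T_{\ell}(\mathcal T_i)$. By Lemma~\ref{lemma:treeswithlongpaths}, for $\ell=\frac{C}{\varepsilon}\ln(1/\varepsilon)$ with $C,\varepsilon_0$ suitable, each summand satisfies $\EE[T_{\ell}]\le 14\varepsilon^3$ and $\text{Var}[T_{\ell}]\le \frac{8}{\varepsilon^3}$. Hence by linearity $\EE[W]\le 14\varepsilon^3 N = 28\varepsilon^5 n$, and since the $\mathcal T_i$ are independent, $\text{Var}[W]\le \frac{8}{\varepsilon^3}N = 16\varepsilon^{-1}n$. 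We want $\Pr[W\ge 29\varepsilon^5 n]=o(1)$; since $29\varepsilon^5 n - \EE[W]\ge \varepsilon^5 n$, Chebyshev's inequality gives $\Pr[W\ge 29\varepsilon^5 n]\le \frac{\text{Var}[W]}{(\varepsilon^5 n)^2}\le \frac{16\varepsilon^{-1}n}{\varepsilon^{10}n^2}=\frac{16}{\varepsilon^{11}n}=o(1)$, as $\varepsilon$ is a fixed constant and $n\to\infty$. This shows whp that $W<29\varepsilon^5 n$, and therefore, via Lemma~\ref{lemma:DLP}, whp $Z_{\ell}<29\varepsilon^5 n$.

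A small technical point to handle carefully: Lemma~\ref{lemma:DLP} is phrased for ``vertex disjoint paths of length at least $\ell$ covering at least $m$ vertices'' in a single fixed threshold, whereas $Z_{\ell}$ uses threshold $\frac{\ell}{3}$; I would simply apply Lemma~\ref{lemma:DLP} with its parameter ``$\ell$'' set to $\frac{\ell}{3}$ and its parameter ``$m$'' set to $29\varepsilon^5 n$, which is exactly the regime in which Lemma~\ref{lemma:treeswithlongpaths} is stated (it is about paths of length $\frac{\ell}{3}$). One also needs $\ell\ge 1$, i.e. the Galton--Watson path-length threshold $\frac{\ell}{3}=\frac{C}{3\varepsilon}\ln(1/\varepsilon)$ is at least $1$, which holds for $\varepsilon<\varepsilon_0$ small. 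Finally, the constants $C,\varepsilon_0$ produced here, together with those from Lemma~\ref{lemma:notmanypathsinsmallcomponents}, can be taken as a common pair by enlarging $C$ and shrinking $\varepsilon_0$, which is what the proof of Theorem~\ref{thm:notmanyvertexdisjointpaths} implicitly needs.

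I expect no serious obstacle: the real content has been pushed into Lemmas~\ref{lemma:DLP} and~\ref{lemma:treeswithlongpaths}. The only mildly delicate part is making sure the decomposition inequality $W=\sum_i T_{\ell}(\mathcal T_i)$ correctly upper bounds the vertices coverable by vertex disjoint paths of length $\ge\frac{\ell}{3}$ in the disjoint union --- but this is immediate since paths in a forest cannot cross tree components, so the optimal packing is the sum of the per-tree optima, and the per-tree optimum is trivially at most $|\mathcal T_i|$ when a long path exists and $0$ otherwise.
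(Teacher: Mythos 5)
Your proof is correct and follows essentially the same route as the paper: reduce to the forest of $2\varepsilon^2 n$ independent Poisson($\mu$)-Galton--Watson trees via Lemma~\ref{lemma:DLP}, bound the covered vertices by $\sum_i T_{i,\ell}$, and apply Lemma~\ref{lemma:treeswithlongpaths} together with Chebyshev to get $\Pr[\sum_i T_{i,\ell}\ge 29\varepsilon^5 n]\le 16/(\varepsilon^{11}n)=o(1)$. The constants and the final bound match the paper's proof exactly.
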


\begin{proof}[Proof of Lemma \ref{lemma:outside2core}]
Recall that $Z_{\ell}$ counts the maximum number of vertices covered by vertex disjoint paths of length at least $\frac{\ell}{3}$ in $\mathcal C_1\setminus \mathcal C^{(2)}_1$. Let $0<\mu<1$ be such that $\mu e^{-\mu}=(1+\varepsilon)e^{-(1+\varepsilon)}$ and consider $2\varepsilon^2n$ independent $\text{Poisson}(\mu)$-Galton-Watson trees $\mathcal T_1,\ldots,\mathcal T_{2\varepsilon^2n}$. By Lemma \ref{lemma:DLP} it suffices for our purposes to show that whp the maximum number of vertices covered by vertex disjoint paths of length at least $\frac{\ell}{3}$ in the disjoint union of $\mathcal T_1,\ldots, \mathcal T_{2\varepsilon^2n}$ is less than $29\varepsilon^{5}n$, for appropriate $C,\varepsilon_0>0$.

For each $1\le i\le 2\varepsilon^2n$ consider the following random variable:
\[T_{i,\ell}:=\left\{\begin{matrix}
|\mathcal T_i| & \text{ if } \mathcal T_i\text{ contains a path of length at least }\frac{\ell}{3}\\
0 & \text{ otherwise}
\end{matrix}\right.\]
and set $T_{\ell}=\sum_{i=1}^{2\varepsilon^2n}T_{i,\ell}$. Clearly $T_{\ell}$ is an upperbound on the maximum number of vertices covered by vertex disjoint paths of length at least $\frac{\ell}{3}$ in in the disjoint union of $\mathcal T_1,\ldots, \mathcal T_{2\varepsilon^2n}$. To finish the proof, we show that whp $T_{\ell}<29\varepsilon^{5}n$, provided $C,\varepsilon_0>0$ are chosen appropriately.

By Lemma \ref{lemma:treeswithlongpaths} we know that there exist constants $C,\varepsilon_0>0$ such that for every $\varepsilon\in (0, \varepsilon_0)$ and for $\ell=\frac{C}{\varepsilon}\ln\left(\frac{1}{\varepsilon}\right)$ we have $\EE\left[T_{i,\ell}\right]\le 14\varepsilon^3$ and $\text{Var}\left[T_{i,\ell}\right]\le \frac{8}{\varepsilon^3}$. Thus, since the random variables $T_{i,\ell}$ are independent, we have that
\[\EE\left[T_{\ell}\right]\le 14\varepsilon^3\cdot 2\varepsilon^2n= 28\varepsilon^5n\;\;\; \text{ and }\;\;\; \text{Var}\left[T_{\ell}\right]\le \frac{8}{\varepsilon^3}\cdot 2\varepsilon^2n= \frac{16n}{\varepsilon}.\]
Thus, by Chebyshev's Inequality (see, e.g., \cite{AlonSpencer}) we conclude that
\[\Pr\left[T_{\ell}\ge 29\varepsilon^5n\right]\le \Pr\left[|T_{\ell}-\EE\left[T_{\ell}\right]|\ge \varepsilon^5n\right]\le \frac{\text{Var}\left[T_{\ell}\right]}{\varepsilon^{10}n^2}\le \frac{16}{\varepsilon^{11}n}=o(1).\]\end{proof}

\section{Concluding remarks}
\label{sec:concludingremarks} We have shown that in order
to find a path of length $\ell=\Omega\left(\frac{\log\left(\frac{1}{\varepsilon}\right)}{\varepsilon}\right)$ in $G\sim\mathcal G\left(n,p\right)$
with at least some constant probability, where
$p=\frac{1+\varepsilon}{n}$ with $\varepsilon>0$ fixed, one needs to
query at least $\Omega\left(\frac{\ell}{p\varepsilon
\log\left(\frac{1}{\varepsilon}\right)}\right)$ pairs of vertices.
This is close to best possible since a randomised depth first search
algorithm from \cite{ks13} finds whp a path of length $\ell$ after
querying at most $O\left(\frac{\ell}{p\varepsilon}\right)$ pairs of
vertices. A natural question, which remains open, is to close the
gap between these bounds. We believe that every adaptive algorithm
which reveals whp a path of length $\ell$ in $G\sim\mathcal G(n,p)$,
where $p=\frac{1+\varepsilon}{n}$ with $\varepsilon>0$ fixed, has to
query $\Omega\left(\frac{\ell}{p\varepsilon}\right)$ pairs of
vertices.

Recall that, to prove our main result, in
Theorem~\ref{thm:notmanyvertexdisjointpaths} we bounded the total
number of vertices covered by vertex disjoint paths of size at least
$\Omega\left(\frac{1}{\varepsilon}\log\left(\frac{1}{\varepsilon}\right)\right)$
in a typical graph sampled from $\mathcal G(n,p)$, $p=\frac{1+\varepsilon}{n}$, by
$O\left(\varepsilon^2 n\right)$. Since a graph $G\sim\mathcal
G(n,p)$ contains whp a path of length $\Theta(\varepsilon^2n)$ (see
e.g. \cite{JLR}), this is best possible up to a
multiplicative constant. If one can show that a similar statement
holds for paths of length $\Omega\left(\frac{1}{\varepsilon}\right)$
then one can modify our proof to obtain a
$\Omega\left(\frac{\ell}{p\varepsilon}\right)$ bound in
Theorem~\ref{thm:noalgorithm}.

In the proof of Theorem~\ref{thm:notmanyvertexdisjointpaths} we needed to bound the number of vertices covered by vertex disjoint paths of a prescribed length $\ell$ in a random tree of fixed size $t$ (Lemma~\ref{lemma:treeswithlongpaths}). Our estimate was a bit wasteful because for trees which contained a path of length $\ell$ we used their total number of vertices $t$ instead of the number of vertices covered by vertex disjoint paths of length $\ell$, which is most likely significantly smaller. A way to fix this is to obtain good bounds for the following question:
\begin{ques}
Given $a=a(t)\in \NN$ and $b=b(t)\in \NN$ what is the probability that a random tree on $t$ vertices contains  $b$ vertex disjoint paths, each of length at least $a$?
\end{ques}
Note that, since the diameter of a random tree on $t$ vertices is whp $\Theta(\sqrt{t})$ (see e.g. \cite{ADJheight}), the only interesting regime is when $ab\ge C\sqrt{t}$ for some constant $C>0$. Moreover, by splitting paths of length larger than $2a$ into smaller subpaths of length at least $a$, we may consider only paths of length between $a$ and $2a$.

One possible approach to this problem would be through a nice argument of Joyal (\cite{Joyal}, see also \cite{ProofsFromTheBook}). It shows that a random tree $\mathcal T$ on $t$ vertices can be obtained from a random map $f:[t]\rightarrow [t]$ as follows. First we create the directed graph $D$ (possibly with loops) on vertex set $[t]$ with edges $i\rightarrow f(i)$ for each $i\in [t]$. Then we look at a maximal set of vertices $M=\{i_1,\ldots,i_m\}\subseteq [t]$ such that $f|_{M}$ is a permutation. We remove the directed edges inside $M$ and replace them by the path $f(i_1)\rightarrow f(i_2)\rightarrow\ldots \rightarrow f(i_m)$ (where $i_1<i_2<\ldots<i_m$). By ignoring the orientations of the edges we obtain the desired tree $\mathcal T$. Note that, since the vertices in $M$ form a path in $\mathcal T$, we must have $|M|=O(\sqrt{t})$ whp. Moreover, if we have a path $P$ in $\mathcal T$ then a moment's thought reveals that either $P$ has at least $\frac{|V(P)|}{3}$ vertices in $M$ or there are $\frac{|V(P)|}{3}$ vertices of $P$ which form a directed path in $D$. Thus, it follows that if we have a collection of $b$ vertex disjoint paths in $\mathcal T$ each of length between $a$ and $2a$ then $D$ contains a collection of vertex disjoint directed paths each of length between $\frac{a+1}{3}$ and $2a$ covering at least $\frac{(a+1)b}{3}-|M|$ vertices. Since $|M|=O(\sqrt{t})$ whp and since we are interested only in the case when $ab\ge C\sqrt{t}$ for some large constant $C>0$, it follows that in that case we have, say, at least $\frac{b}{10}$ such paths. Thus, up to changing $a$ and $b$ by constant multiplicative factors, it is enough to estimate the probability   that the directed graph $D$ obtained from a random map $f:[t]\rightarrow [t]$ contains at least $b$ vertex disjoint directed paths, each of length (at least) $a$.

We can give a simple upper bound on this probability by taking the union bound over all collections of $b$ vertex disjoint directed paths of length $a$. This shows that the probability that we want to estimate is at most
\[\frac{t!}{(t-(a+1)b)!b!}\left(\frac{1}{t}\right)^{ab}=\frac{t^b}{b!}\prod_{i=1}^{(a+1)b-1}\left(1-\frac{i}{t}\right)\le e^{b+b\ln\left(t/b\right)-\binom{(a+1)b}{2}/t}.\]
Unfortunately, this upper bound is not strong enough to allow us to prove Theorem~\ref{thm:notmanyvertexdisjointpaths} for paths of length at least $\Omega\left(\frac{1}{\varepsilon}\right)$ because when $b$ is roughly a constant and $a$ is close to $\sqrt{t}$ the positive term $b\ln\left(t/b\right)$ in the exponent is much larger than the negative term $\binom{(a+1)b}{2}/t$. Thus, it would be nice to obtain tighter bounds for the probability in question.

\medskip

\textbf{Acknowledgement.} We would like to thank the anonymous referees for carefully reading our paper as well as for their helpful comments. Parts of this work were carried out when the third author visited the School of Mathematical Sciences of Tel Aviv University, Israel. We would like to thank this institution for its hospitality and for creating a stimulating research environment.

\end{document}